\documentclass[11pt,a4paper,reqno]{amsart}

\usepackage{times} 
\usepackage{amsmath,dsfont} 
\usepackage{amssymb}  
\usepackage{amsthm}
\usepackage{latexsym}
\usepackage{amsfonts,bbm}
\usepackage{xcolor}
\usepackage{mathtools}
\usepackage{enumerate}
\usepackage{cite}
\usepackage[foot]{amsaddr}
\usepackage{algorithm,algorithmic}
\usepackage[a4paper,left=2.0cm,right=2.0cm,top=2.5cm,bottom=2.5cm]{geometry}
\usepackage{multicol}

\usepackage{tikz}
\usetikzlibrary{calc,positioning, arrows.meta, shapes.geometric}
\usepackage{xargs}
\usepackage[hidelinks]{hyperref}
\usepackage{textcomp}

\usepackage{tikz}
\usetikzlibrary{positioning, arrows.meta, shapes.geometric}
\usepackage{graphicx}
\usepackage{xcolor}

\usepackage[colorinlistoftodos,prependcaption,textsize=small]{todonotes}
\usepackage{xargs}  
\usepackage{caption}
\usepackage[labelformat=simple]{subcaption}

\newtheorem{thm}{Theorem}[section]
\newtheorem{cor}[thm]{Corollary}

\newtheorem{lem}[thm]{Lemma}
\newtheorem{rem}{Remark}
\newtheorem{prop}[thm]{Proposition}
\newtheorem{defn}{Definition}

\numberwithin{equation}{section}
\allowdisplaybreaks

\newcommand{\R}{\mathbb{R}}
\newcommand{\N}{\mathbb{N}}

\newcommand{\calN}{\mathcal N}

\newcommand{\dom}[1]{D(#1)}

\newcommand{\zer}[1]{\operatorname{zer}(#1)}

{\left(\begin{smallmatrix}}
{\end{smallmatrix}\right)}

\newenvironment{smallbmatrix}
{\left[\begin{smallmatrix}}
{\end{smallmatrix}\right]}

\title{Stabilization of monotone control systems with input constraints}

\author{Till Preuster$^1$}\address{$^1$Junior Professorship Numerical Mathematics, Faculty of Mathematics, Chemnitz University of Technology, Germany\\ Mail: \textsc{\{till.preuster,manuel.schaller\}@math.tu-chemnitz.de}}
\author{Hannes Gernandt$^2$}\address{$^2$School of Mathematics and Natural Sciences, University of Wuppertal, Germany\\ Mail: \textsc{gernandt@uni-wuppertal.de}}
\author{Manuel Schaller$^1$}

\thanks{This work was funded by the Deutsche Forschungsgemeinschaft (DFG, German Research Foundation) – Project-ID 531152215 – CRC 1701.}

\begin{document}

\begin{abstract}
We present a stabilizing output-feedback controller for nonlinear finite and infinite-dimensional control systems governed by monotone operators that respects given input constraints. In particular, we show under a detectability-like assumption 
that a saturated version of the classical output feedback controller in passivity-based control achieves control-constrained stabilization as long as the control corresponding to the desired equilibrium is in the interior of the control constraint set. 
We illustrate our findings using a heat equation, a wave equation, and a finite-dimensional nonlinear port-Hamiltonian system.
\end{abstract}

\maketitle
\smallskip
\noindent \textbf{Keywords:}  stabilization, monotone operators, output feedback, control constraints
\smallskip
\section{Introduction}
\noindent Maximal monotone operators provide a unifying framework for the analysis of nonlinear and infinite-dimensional dynamical systems. Originating in convex analysis and nonlinear functional analysis, monotonicity has become a central structural property in the study of evolution equations, variational inequalities, and optimization algorithms. In Hilbert spaces, maximally monotone operators generate nonlinear contraction semigroups and thus induce well-posed dissipative dynamics. This semigroup viewpoint makes it possible to treat broad classes of systems, including linear, nonlinear, and even set-valued, in a single unified framework.

In parallel, port-Hamiltonian and dissipative systems theory has established powerful tools for stabilization via colocated feedback. In particular, negative output feedback is known to inject damping and to preserve structural properties such as passivity. In finite-dimensional linear settings, this mechanism guarantees asymptotic stability under standard controllability or detectability assumptions. Under a suitable detectability assumption, it was shown in \cite{sun2011global} that the colocated feedback stabilizes finite-dimensional port-Hamiltonian systems in the absence of control constraints. Related feedback laws have also been studied in \cite{schaft1996l2,ortega2002interconnection}, where they appear in the context of \textit{damping injection}.
However, when control constraints are imposed, classical feedback laws typically violate admissibility conditions, and stability under constrained actuation becomes significantly more delicate. Usually, adding predictive capabilities to the controller alleviates this problem, such as in model predictive control~(MPC; \cite{GrunPann16,RawlMayn17}). Therein, the feedback is computed by means of a finite-horizon optimal control problem. While asymptotic stability under input and state constraints is well studied, the evaluation of the MPC feedback law is computationally challenging because it implicitly requires solving a nonlinear optimization problem.

The purpose of this work is to provide an easy-to-implement output-feedback controller that (i) satisfies given control constraints and (ii) achieves asymptotic stability for nonlinear control systems. The class of problems we consider are \emph{maximal monotone control systems} with colocated input-output structure. Specifically, we consider systems of the form
\begin{equation*}
	\dot x(t) = -M(x(t)) + Bu(t), 
	\qquad \qquad
	y(t) = B^* x(t),
\end{equation*}
where $M$ is maximally monotone on a Hilbert space $X$ and $B$ is a bounded linear input operator with adjoint $B^*$. This class encompasses linear dissipative systems, nonlinear gradient-type flows, and finite and infinite-dimensional port-Hamiltonian models. The colocated output $ y= B^* x$ ensures a natural energy balance relation and allows for the interpretation of $u$ and $y$ as conjugate port variables.

Our objective is constrained stabilization towards a prescribed controlled equilibrium $(x^\star,u^\star)$. Given a closed input constraint set $F \subset U$, we seek an output feedback law that enforces $u(t) \in F$ pointwise while ensuring asymptotic convergence $x(t) \to x^\star$. Rather than employing dynamic compensation or optimization-based control, we investigate a structurally simple \emph{saturated static output feedback} obtained by projecting the unconstrained damping injection onto $F$:
\begin{equation*}
	u(x) 
	= 
	P_F\!\left(u^\star - B^*(x-x^\star)\right).
\end{equation*}
The central insight of this paper is that monotonicity provides sufficient dissipativity to guarantee convergence under this projected feedback, even in nonlinear and infinite-dimensional settings. The projection operator acts as a cut-off to the classical damping injection to guarantee the satisfaction of control constraints. 
Our analysis shows that, under a suitable detectability assumption, the closed-loop operator remains maximally monotone and generates a nonlinear contraction semigroup with the desired equilibrium as its unique asymptotically stable point.

The main contribution of this work is as follows.  We prove that a projection-based saturated output feedback enforces arbitrary closed input constraints containing the equilibrium input while achieving asymptotic stability of the closed-loop. We illustrate the wide applicability of our results by numerical examples involving a nonlinear finite-dimensional problem and problems subject to wave and heat equations. 

The paper is organized as follows. After recalling the basics of maximal monotone operators and nonlinear semigroups in Section~\ref{sec:basics}, we provide our main analytical results in Section~\ref{sec:maxmon}. Therein, we first introduce maximal monotone control systems, formulate the stabilization problem  and analyze the projected feedback law, establish well-posedness of the closed-loop dynamics, and prove asymptotic convergence in our main result Theorem~\ref{thm:main} of Subsection~\ref{subsec:main} under a vanishing output vanishing state (VOVS) condition. Then, in Subsection~\ref{subsec:detec} we show that, for linear problems, this assumption is satisfied if the system is detectable. Last, in Section~\ref{sec:num}, we illustrate the controller by means of various examples. 
\smallskip

\noindent \textbf{Code availability.} The code for all numerical experiments is provided in the repository 
\begin{center} \url{www.github.com/preustertill/saturated_output_feedback}.
\end{center}
\section{Notations and preliminaries}\label{sec:basics}
\noindent Throughout this note, we let $X$ and $U$ be two Hilbert spaces. The inner product of $X$ is denoted by $\langle\cdot,\cdot\rangle_X$. The set of bounded linear operators from $X$ to $U$ is denoted by $L(X,U)$. 
We call a (possibly unbounded, nonlinear) operator $M: X \supset \dom{M} \to X$ with domain $\dom{M}$ \textit{monotone} if 
\begin{equation}
    \langle M(x)-M(z),x-z\rangle_X \geq 0
\end{equation}
for all $x,z \in \dom{M}$. If $M$ does not admit a proper monotone extension 
then $M$ is called \textit{maximal monotone}. Equivalently, $\lambda I+M$ is onto for some (and hence for all) $\lambda>0$, \cite{BausComb2011}. To avoid confusion, we denote the zeros of nonlinear monotone operators $M:  X \supset \dom{M} \to X$ by 
\begin{equation*}
    \zer M = \left\{ x \in \dom{M}: M(x)=0 \right\}
\end{equation*}
whereas the \textit{kernel} of a linear operator $A: X \supset \dom{A} \to U$ is indicated by $\ker A$. 

Since control systems by maximally monotone operators $M$ are the main subject of this paper, we first consider the initial value problem
\begin{align}\label{eq:cs}
     \tfrac{\mathrm{d}}{\mathrm{d}t}x(t) &= -M(x(t)) + f(t),\qquad
     x(0) = x_0,
\end{align}
where $f \in L^1((0,\infty); X)$, i.e.\ $f:(0,\infty)\rightarrow X$ is measurable and Bochner integrable. Define the nonlinear operator
\begin{align}\label{eq:semigroup}
    S(t)x_0=\lim_{n\rightarrow\infty}\left(I+\tfrac{t}{n}M\right)^{-n}x_0,\quad x_0\in\overline{\dom M}.
\end{align}
It is well-known, see \cite[Definition 4.4]{Barb10}, that the operator given in \eqref{eq:semigroup} defines a semigroup 
\begin{equation*}
    (S(t))_{t\geq 0}:\overline{\dom M} \to \overline{\dom M}
\end{equation*}
of nonlinear nonexpansive mappings, i.e.  
\begin{equation}\label{eq:nonexp}
    \|S(t)x-S(t)z\|_X\leq \|x-z\|_X
\end{equation}
for all $t \ge0$, $x,z \in \overline{\dom M}$. In this case, we say that the semigroup $S(t)$ \textit{is generated by $-M$}.
In the homogeneous case, that is, $f \equiv 0$, then for initial value $x_0 \in \dom M$ the unique strong solution of \eqref{eq:cs} is given by
\begin{equation}\label{eq:sol}
    x(t) = S(t)x_0, \qquad t \ge 0.
\end{equation}
If, more generally, $x_0 \in \overline{\dom M}$, Equation~\eqref{eq:sol} defines the unique mild solution of \eqref{eq:cs}. In presence of an inhomogeneity $f\in W^{1,1}((0,\infty);X)$, the corresponding strong (and mild) solution 
exists, see e.g., \cite[Thm. 4.5]{Barb10}.

We recall the following result from \cite{webb1972continuous}. Let $A: X \supset \dom{A} \to X$ be a closed, densely defined, linear maximal monotone operator and let $(T(t))_{t \ge 0}$, be the semigroup generated by $-A$. Let $B:X \to X$ be a continuous, everywhere defined, nonlinear monotone operator. Then, for every $x_0 \in X$, there exists a unique solution $P(t)x_0$ of the integral equation
\begin{equation}\label{eq:var_of_param}
P(t)x_0
=
T(t)x_0
-
\int_0^t T(t-s)\, B(P(s)x_0)\, ds,
\quad t \ge 0.
\end{equation}
Here, $(P(t))_{t \ge 0}$, is a strongly continuous semigroup of nonlinear nonexpansive mappings on $X$ with generator $-(A+B)$, and $A+B$ is maximal monotone in $X$. We call \eqref{eq:var_of_param} the \textit{variation-of-parameters formula}.
    
\section{Constrained stabilization of monotone control systems}\label{sec:maxmon}
\noindent The class of systems considered in this work is governed by a maximally monotone operator together with a linear, bounded input operator. We further assume a particular structure of the output equation, namely we assume a colocated input-output configuration. In this sense, our setting comprises many models from port-Hamiltonian theory (see, e.g., \cite{jacob2012linear,SchJ14}). 
\begin{defn}\label{def:monosys}
Let $X$ and $U$ be Hilbert spaces. Let $M : X \supset \dom M \to X$ be a maximally monotone operator, and let $B \in L(U,X)$. Then we call
\begin{subequations}
    \label{eq:monotone_phs}
    \begin{align}
    \tfrac{\mathrm{d}}{\mathrm{d}t} x(t) &= -M(x(t)) + Bu(t), \qquad x(0)=x_0,\label{eq:monotone_phs:state}\\
    y(t) &= B^* x(t) \label{eq:monotone_phs:output}
\end{align}
\end{subequations}
a \textit{maximal monotone control system}, abbreviated by $(M,B)$.
\end{defn}

For finite-dimensional systems, the class \eqref{eq:monotone_phs} was introduced in \cite{CamSch23}. In \cite{gernandt2025port,gernandt2026optimization}, the class was extended to infinite-dimensional dynamics and set-valued operators to study optimization algorithms involving inequality constraints and projection operators.

We now may formulate the contribution of this work mathematically precise.

\medskip

\textbf{Problem formulation}. Given a maximally monotone control system in the sense of \eqref{eq:monotone_phs}, a convex and closed control constraint set $F\subset U$ and a~prescribed controlled equilibrium 
$(x^\star,u^\star) \in \dom{M} \times \operatorname{int}F$, i.e.,
\begin{equation}\label{eq:eqM}
-M(x^\star) +Bu^\star =0.
\end{equation}
Then our goal is to find an output feedback law $\kappa_{(x^\star,u^\star)} : U \to U$ such that with $u=\kappa_{(x^\star,u^\star)}(B^*x)\in F\subset U$, the dynamics
\begin{align*}
    \dot x = -M(x) + Bu = -M(x) + B\kappa_{(x^\star,u^\star)}(B^*x)
\end{align*}
is asymptotically stable, i.e., $x(t) \to x^\star$ for $t\to \infty$.

In this work, we will show that the easy-to-implement and conceptually simple \emph{saturated output injection feedback law}
\begin{align}\label{eq:satfeed}
\begin{split}
        u  = \kappa_{(x^\star,u^\star)}(y) &= P_F (u^\star-(y-y^\star)) = P_F(u^\star-B^* (x-x^\star))
\end{split}
\end{align}
fulfills this task. Here, $y^\star = B^* x^\star$ is the equilibrium output and $P_F$ is the $U$-orthogonal projection onto the closed set $F\subset U$. 

\subsection{Main result: Control-constrained stabilization}\label{subsec:main}


\noindent To show that the feedback law \eqref{eq:satfeed} asymptotically stabilizes the the open-loop dynamics \eqref{eq:monotone_phs}, we will analyze the closed-loop dynamics
\begin{subequations}\label{eq:closed_loop_system}
    \begin{align}
    \tfrac{\mathrm{d}}{\mathrm{d}t}x(t)& = -M_\mathrm{cl}(x(t)), \qquad x(0)=x_0,\label{eq:closed_loop_system:state}\\
    y(t) &= B^* x(t) \label{eq:closed_loop_system:output}
\end{align}
\end{subequations}
governed by the operator $M_\mathrm{cl} : X \supset \dom{M} \to X$ with
\begin{equation}\label{eq:M_cl}
    M_\mathrm{cl}(x)= M(x) - BP_F(u^\star-B^*(x-x^\star)).
\end{equation}
Before showing asymptotic stability of \eqref{eq:closed_loop_system}, we show maximal monotonicity of the closed-loop operator $M_\mathrm{cl}$. To deal with the nonlinear projection induced by the saturated feedback law, we start with an auxiliary result. 

\begin{lem}\label{lem:P_F_firmly_nonexp}
    Let $F \subset U$ be closed and convex. Then, it holds that $P_F$ is firmly nonexpansive, i.e.,
    \begin{equation}\label{eq:P_F_firmly_nonexp}
        \langle P_F(x)-P_F(z),x-z\rangle_U \geq \| P_F(x)-P_F(z)\|_U^2
    \end{equation}
    for all $x,z \in U$.
\end{lem}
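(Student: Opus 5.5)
The plan is to derive firm nonexpansiveness directly from the variational characterization of the projection onto a closed convex set. First, since $F$ is nonempty, closed and convex in the Hilbert space $U$, for every $x \in U$ there is a unique nearest point $P_F(x) \in F$. This point is characterized by the variational inequality
\begin{equation*}
    \langle x - P_F(x), w - P_F(x)\rangle_U \le 0 \qquad \text{for all } w \in F.
\end{equation*}
I would recall this characterization, or justify it briefly: take $w \in F$ and $\varepsilon\in(0,1]$, and expand
\begin{equation*}
    \|x-(P_F(x)+\varepsilon(w-P_F(x)))\|_U^2 \ge \|x-P_F(x)\|_U^2,
\end{equation*}
which holds by minimality because the point $P_F(x)+\varepsilon(w-P_F(x))$ lies in $F$ by convexity. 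Dividing by $\varepsilon$ and letting $\varepsilon\to 0$ yields the inequality.

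Next, I apply the variational inequality twice. For the pair $x$, choose $w = P_F(z)$. For the pair $z$, choose $w = P_F(x)$. This gives
\begin{align*}
    \langle x - P_F(x), P_F(z) - P_F(x)\rangle_U &\le 0,\\
    \langle z - P_F(z), P_F(x) - P_F(z)\rangle_U &\le 0.
\end{align*}
Adding the two inequalities and collecting terms gives
\begin{equation*}
    \langle (x-z) - (P_F(x)-P_F(z)), P_F(x)-P_F(z)\rangle_U \ge 0 .
\end{equation*}
Rearranging this is exactly \eqref{eq:P_F_firmly_nonexp}.

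There is no real obstacle here. The only step needing care is the derivation of the variational inequality, specifically the limit $\varepsilon\to0$ after dividing by $\varepsilon$. Alternatively, one could simply cite \cite{BausComb2011} for this standard fact. Existence and uniqueness of $P_F(x)$ is also standard in Hilbert spaces, via the parallelogram law and completeness, and I would cite it rather than reprove it. I would also note that nonemptiness of $F$ is implicit in the setting, since $u^\star \in \operatorname{int}F$.
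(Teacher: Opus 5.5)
Your proof is correct, but it takes a different, more elementary route than the paper's. The paper argues abstractly and entirely by citation to \cite{BausComb2011}. It introduces the indicator function $i_F$ and invokes three facts:
\begin{itemize}
\item $\partial i_F$ is maximal monotone;
\item resolvents of maximal monotone operators are everywhere defined and firmly nonexpansive;
\item $(I+\partial i_F)^{-1}=P_F$.
\end{itemize}
Your argument is essentially that chain worked out by hand. The variational inequality you derive says exactly that $x-P_F(x)\in\partial i_F(P_F(x))$, i.e.\ that $x-P_F(x)$ lies in the normal cone to $F$ at $P_F(x)$, which is the identification $P_F=(I+\partial i_F)^{-1}$. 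Adding the two instances is the standard proof that the resolvent of a monotone operator is firmly nonexpansive, specialized to the normal cone.

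One difference is worth noting. Your route needs only monotonicity of the normal cone, and gets the projection everywhere defined from the Hilbert projection theorem. The paper instead uses maximality to get the full domain of the resolvent.

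Your version is self-contained: you only cite existence and uniqueness of nearest points. The paper's version fits the monotone-operator framework used throughout and places $P_F$ within the resolvent calculus. Both implicitly need $F\neq\emptyset$; the paper assumes it through properness of $i_F$, and you correctly point it out.
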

\begin{proof}
    We define the \textit{indicator function} $i_F: U \to \R \cup \{+\infty\}$ via 
    \begin{equation*}
        i_F(z) = \begin{cases}
            0, & \text{if}\ z \in F, \\
            + \infty , & \text{otherwise}.
        \end{cases}
    \end{equation*}
    By closedness and convexity of $F \subset U$, $i_F$ is a proper, convex and lower semi-continuous function. Thus, its subgradient $\partial i_F: U \rightrightarrows U$ defined by 
    \begin{align*}
       \partial i_F(z_0)  =\left\{ z' \in U \, \middle| \, \langle z', x-z_0 \rangle_{U} \leq i_F(x)-i_F(z_0) \, \forall \, x \in U \right\} 
    \end{align*}
    is a set-valued maximal monotone operator, \cite[Thm. 20.25]{BausComb2011}. By  \cite[Prop. 23.8]{BausComb2011}, its resolvent $(I+\partial i_F)^{-1}$ is an everywhere defined, firmly nonexpansive operator. Furthermore, from \cite[Ex. 12.25]{BausComb2011} we have that $(I+\partial i_F)^{-1}=P_F$ which proves the claim. 
\end{proof}
The following result shows that due to nonexpansitvity of the projection, the closed-loop system is governed by a maximal monotone operator.
\begin{lem}\label{lem:mcl_maxmon}
    The governing operator $M_\mathrm{cl} : X \supset \dom{M} \to X$ of the closed-loop system \eqref{eq:closed_loop_system} is maximal monotone.
\end{lem}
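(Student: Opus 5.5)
The plan is to write $M_\mathrm{cl} = M + G$, where
\[
G(x) := -BP_F\bigl(u^\star - B^*(x-x^\star)\bigr), \qquad x \in X.
\]
We then show two things: $G$ is an everywhere defined, continuous (indeed Lipschitz) monotone operator on $X$, and the sum of a maximal monotone operator with such an operator is again maximal monotone.

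\textbf{Monotonicity of $G$.} Fix $x,z\in X$ and set $a = u^\star - B^*(x-x^\star)$ and $b = u^\star - B^*(z-x^\star)$, so that $a-b = -B^*(x-z)$. Then
\[
\langle G(x)-G(z), x-z\rangle_X = -\langle P_F(a)-P_F(b), B^*(x-z)\rangle_U = \langle P_F(a)-P_F(b), a-b\rangle_U \ge \|P_F(a)-P_F(b)\|_U^2 \ge 0,
\]
where the first inequality is Lemma~\ref{lem:P_F_firmly_nonexp}. Firm nonexpansiveness also gives that $P_F$ is nonexpansive. Hence $\|G(x)-G(z)\|_X \le \|B\|\,\|B^*\|\,\|x-z\|_X$, so $G$ is Lipschitz continuous with domain all of $X$. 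Consequently $M+G$ is monotone on $\dom M$.

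\textbf{Maximality.} We cannot simply quote the result of \cite{webb1972continuous} recalled in Section~\ref{sec:basics}, because that statement assumes the maximal monotone part is linear, while our $M$ is nonlinear. Two routes are available.

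\emph{Route 1 (preferred).} Invoke the standard perturbation theorem: if $M$ is maximal monotone and $G$ is monotone, everywhere defined and continuous, then $M+G$ is maximal monotone (e.g.\ \cite{BausComb2011}, Cor.~25.5, via Rockafellar's sum theorem, using $\dom G = X$ and the fact that a continuous monotone operator on all of $X$ is maximal monotone).

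\emph{Route 2 (self-contained).} Prove surjectivity of $I+M+G$ directly, exploiting that $G$ is Lipschitz with constant $L = \|B\|^2$. Given $f\in X$, solve $x + M(x) + G(x) = f$ as the fixed-point problem
\[
x = (\lambda I + M)^{-1}\bigl((\lambda-1)x + f - G(x)\bigr)
\]
for a suitable $\lambda>0$. The resolvent $(\lambda I+M)^{-1}$ is everywhere defined by maximality of $M$ and is $1/\lambda$-Lipschitz by monotonicity, so the right-hand side is Lipschitz with constant $(|\lambda-1| + L)/\lambda$. For $\lambda > (1+L)/2$ and $\lambda \ge 1$ this constant equals $1 - (1-L)/\lambda$. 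That is below one only when $L<1$, so the naive choice does not always give a contraction.

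\textbf{Main obstacle.} The difficulty is exactly this scaling issue in Route~2. To get around it, first show that $\mu I + M + G$ is onto for some large $\mu$: writing the equation as $x = (\mu I+M)^{-1}(f - G(x))$, the map is a contraction as soon as $\mu > L$, and Banach's fixed point theorem gives a solution. Surjectivity of $\mu I + M_\mathrm{cl}$ for one $\mu>0$ suffices for maximal monotonicity of the monotone operator $M_\mathrm{cl}$, by the equivalence recalled in Section~\ref{sec:basics}. This bypasses the need for the general sum theorem, and I would present this version.
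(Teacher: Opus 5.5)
Your proposal is correct. Its monotonicity step and your Route~1 are exactly the paper's proof. The paper writes $M_\mathrm{cl}=M+K$ with $K=G$, gets monotonicity of $K$ from the firm nonexpansiveness of $P_F$ as you do, and concludes maximality from $\dom{K}=X$ and \cite[Cor.~25.5]{BausComb2011}. Your remark that this also requires $K$ itself to be maximal monotone, which follows from its continuity, makes explicit a step the paper leaves implicit.

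The version you say you would present, Route~2, is a genuinely different and valid argument. The resolvent $(\mu I+M)^{-1}$ is everywhere defined and $\tfrac{1}{\mu}$-Lipschitz, and $G$ is $\|B\|^2$-Lipschitz. Hence $x\mapsto(\mu I+M)^{-1}(f-G(x))$ is a contraction for $\mu>\|B\|^2$, and its fixed point lies in $\dom{M}$ and solves $\mu x+M_\mathrm{cl}(x)=f$. Surjectivity for one $\mu>0$ plus monotonicity then gives maximality by the elementary direction of Minty's theorem. Concretely, if $(x_0,y_0)$ is monotonically related to the graph, solve $\mu x_1+M_\mathrm{cl}(x_1)=\mu x_0+y_0$ and test with $x=x_1$ to get $x_1=x_0$.

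The trade-off is as follows. The paper's route is shorter and more general, since it only needs the feedback term to be continuous (indeed, maximal monotone with full domain), not Lipschitz. Yours avoids the sum theorem altogether, using only Banach's fixed-point theorem and the Lipschitz bound that nonexpansiveness of $P_F$ provides. It is also constructive: it computes the resolvents of $M_\mathrm{cl}$ by Picard iteration on those of $M$. That is exactly what the implicit Euler steps $(I+\tfrac{t}{n}M_\mathrm{cl})^{-1}$ in \eqref{eq:semigroup} need once $t/n<1/\|B\|^2$.

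Your preliminary attempt with $I+M+G$ and the parameter $\lambda$ is unnecessary and can be dropped.
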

\begin{proof}
We first show that the operator $K: X \to X$ with
\begin{equation*}
    K(x)\coloneqq -  BP_F(u^\star-B^*(x-x^\star))
\end{equation*}
is monotone. To this end, let $x,y \in X$  and set $\tilde{x}\coloneqq x-x^\star, \tilde{y}\coloneqq y-x^\star$. Then, monotonicity of $K$ follows from
\begin{align*}
     \langle -BP_F(u^\star - B^*\tilde{x})+BP_F(u^\star - B^*\tilde{y}) , x-y\rangle_X 
    &= \langle P_F(u^\star - B^*\tilde{x})-P_F(u^\star - B^*\tilde{y}) , -B^*\tilde{x}+B^*\tilde{y}\rangle_X \\
    & \geq \|  P_F(u^\star - B^*\tilde{x})-P_F(u^\star - B^*\tilde{y}) \|_X^2 \\ & \geq 0,
\end{align*}
where the second-last inequality follows from $P_F$ being firmly nonexpansive, cf. Lemma \ref{lem:P_F_firmly_nonexp}. 
Because the sum of two monotone operators is monotone, we conclude that $M_\mathrm{cl}=M+K$ is a monotone operator. The maximality of $M_\mathrm{cl}$ is a consequence of $\dom{K}=X$ and perturbation theory for maximal operators \cite[Cor. 25.5]{BausComb2011}.
\end{proof}

The following auxiliary result provides a relation of the desired steady state and the zeros of the closed-loop operator.
\begin{lem}\label{lem:zeros}
     Let $F \subset U$ be closed, convex and let a controlled steady state $(x^\star,u^\star) \in D(M)\times \operatorname{int} F$ be given. Then, 
    \begin{equation}\label{eq:inclusionlemzeros}
        \{x^\star \}  \subset \zer{M_\mathrm{cl}} \subset \{x^\star \} + \ker B^*.
    \end{equation}
In particular, $\zer{M_\mathrm{cl}}$ is nonempty.
\end{lem}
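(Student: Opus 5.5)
The first inclusion is a direct computation. Since $u^\star\in\operatorname{int}F\subset F$, we have $P_F(u^\star)=u^\star$. Hence
\[
M_\mathrm{cl}(x^\star)=M(x^\star)-BP_F(u^\star-B^*(x^\star-x^\star))=M(x^\star)-Bu^\star=0
\]
by the equilibrium relation \eqref{eq:eqM}. Also $x^\star\in\dom M=\dom{M_\mathrm{cl}}$, so $x^\star\in\zer{M_\mathrm{cl}}$, which in particular shows that the zero set is nonempty.

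For the second inclusion, take any $z\in\zer{M_\mathrm{cl}}$. The natural tool is the strengthened monotonicity estimate from the proof of Lemma~\ref{lem:mcl_maxmon}. We split $M_\mathrm{cl}=M+K$ with $K(x)=-BP_F(u^\star-B^*(x-x^\star))$ and test $M_\mathrm{cl}(z)-M_\mathrm{cl}(x^\star)=0$ against $z-x^\star$:
\[
0=\langle M(z)-M(x^\star),z-x^\star\rangle_X+\langle K(z)-K(x^\star),z-x^\star\rangle_X .
\]
The first term is nonnegative by monotonicity of $M$. For the second term, the firm nonexpansiveness of Lemma~\ref{lem:P_F_firmly_nonexp}, applied as in the proof of Lemma~\ref{lem:mcl_maxmon} with $\tilde y=0$, gives
\[
\langle K(z)-K(x^\star),z-x^\star\rangle_X\ \ge\ \|P_F(u^\star-B^*(z-x^\star))-u^\star\|_U^2 .
\]
Both nonnegative terms must therefore vanish, so $P_F(u^\star-B^*(z-x^\star))=u^\star$.

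The main step is to convert this into $B^*(z-x^\star)=0$, and this is where $u^\star\in\operatorname{int}F$ is used. Set $w=u^\star-B^*(z-x^\star)$. The projection characterization gives $\langle w-P_F(w),f-P_F(w)\rangle_U\le 0$ for all $f\in F$, i.e.\ $\langle w-u^\star,f-u^\star\rangle_U\le0$ for all $f\in F$. Since a ball $u^\star+\varepsilon\,\overline{B}$ lies in $F$, I choose $f=u^\star+\varepsilon (w-u^\star)/\|w-u^\star\|_U$ whenever $w\neq u^\star$. This yields $\varepsilon\|w-u^\star\|_U\le0$, a contradiction. Hence $w=u^\star$, i.e.\ $B^*(z-x^\star)=0$, and so $z\in\{x^\star\}+\ker B^*$.

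Equivalently, $P_F(w)=u^\star$ means $w-u^\star$ lies in the normal cone of $F$ at the interior point $u^\star$, and that cone is $\{0\}$. Without the interiority assumption $B^*(z-x^\star)$ could be a nonzero normal vector, so this step is the one that genuinely needs $u^\star\in\operatorname{int}F$.
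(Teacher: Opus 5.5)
Your proof is correct and follows the paper's argument. The first inclusion comes from $P_F(u^\star)=u^\star$ and the equilibrium relation. The second comes from monotonicity of $M$ together with firm nonexpansiveness of $P_F$, which forces $P_F(u^\star-B^*(z-x^\star))=u^\star$. Your variational-inequality step (an interior point has trivial normal cone) makes explicit the final implication $P_F(w)=u^\star\in\operatorname{int}F\Rightarrow w=u^\star$, which the paper only asserts.
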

\begin{proof}
    First, $x^\star \in \zer{M_\mathrm{cl}}$ as
    \begin{align*}
         M(x^\star) - BP_F(u^\star-B^*(x^\star-x^\star)) 
         = M(x^\star) - BP_F(u^\star) 
         =  M(x^\star) -Bu^\star =0
    \end{align*}
    due to $u^\star \in \operatorname{int}F$ and since $(x^\star,u^\star)$ is a controlled equilibrium of the open-loop system \eqref{eq:monotone_phs}. To see the second inclusion, let $x \in \zer{M_\mathrm{cl}}$, i.e.,
    \begin{align*}
        M(x)=BP_F(u^\star - B^*(x-x^\star)).
    \end{align*}
    Set $\tilde{x}=x-x^\star$. By monotonicity of $M$ and as $M(x^\star)=Bu^\star$ we obtain
    \begin{align*}
        0&\leq \langle M(x)-M(x^\star), x-x^\star \rangle_X \\
         &= \langle BP_F(u^\star - B^*\tilde{x})-Bu^\star, \tilde{x} \rangle_X \\
         &= -\langle P_F(u^\star-B^*\tilde{x}) - P_F(u^\star), u^\star-B^*\tilde{x}-u^\star \rangle_U  \\
         & \leq - \| P_F(u^\star-B^*\tilde{x}) - u^\star\|_U^2 \\ &\leq 0
    \end{align*}
    where the second-last inequality follows from the non-expansivity of $P_F$ proven in Lemma~\ref{lem:P_F_firmly_nonexp}. Hence, 
    \begin{equation*}
        P_F( u^\star - B^*(x-x^\star)) =  u^\star \in \operatorname{int} F
    \end{equation*}
    such that $B^*(x-x^\star)=0$ and hence $x-x^\star \in \ker B^*$ which proves the assertion.
\end{proof}
The following result thus directly follows.
\begin{cor}
    The pair $(x^\star, 0) \in \dom{M} \times U$ is an equilibrium of the closed-loop system \eqref{eq:closed_loop_system}. 
\end{cor}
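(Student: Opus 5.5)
The corollary says that $(x^\star,0)$ is an equilibrium of \eqref{eq:closed_loop_system}. I read the second component as the closed-loop right-hand side: the constant trajectory $x(t)\equiv x^\star$ has $\tfrac{\mathrm{d}}{\mathrm{d}t}x = 0 = -M_\mathrm{cl}(x^\star)$. So the plan is to show that $x^\star\in\dom{M_\mathrm{cl}}$, that $M_\mathrm{cl}(x^\star)=0$, and that the constant function $x^\star$ is the unique solution of \eqref{eq:closed_loop_system} with initial value $x_0=x^\star$.

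First, $\dom{M_\mathrm{cl}}=\dom{M}$, and the feedback part of $M_\mathrm{cl}$ is everywhere defined, so $x^\star\in\dom{M}$ gives $x^\star\in\dom{M_\mathrm{cl}}$. Second, Lemma~\ref{lem:zeros} gives $x^\star\in\zer{M_\mathrm{cl}}$, i.e.
\begin{equation*}
M_\mathrm{cl}(x^\star)=M(x^\star)-BP_F(u^\star)=M(x^\star)-Bu^\star=0 .
\end{equation*}
Here $P_F(u^\star)=u^\star$ because $u^\star\in\operatorname{int}F\subset F$, and the last equality is \eqref{eq:eqM}.

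Third, I tie this to the dynamics. By Lemma~\ref{lem:mcl_maxmon}, $-M_\mathrm{cl}$ generates a nonexpansive semigroup $S_\mathrm{cl}(t)$ on $\overline{\dom{M}}$, and for $x_0\in\dom{M}$ the map $t\mapsto S_\mathrm{cl}(t)x_0$ is the unique strong solution. The constant function $x(t)\equiv x^\star$ lies in $\dom{M}$ for all $t$ and satisfies $\dot x=0=-M_\mathrm{cl}(x^\star)$, so it is a strong solution. By uniqueness, $S_\mathrm{cl}(t)x^\star=x^\star$ for all $t\ge0$.

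The same conclusion also follows directly from the exponential formula \eqref{eq:semigroup}. Since $(I+\tfrac tn M_\mathrm{cl})x^\star=x^\star$, every resolvent fixes $x^\star$, and hence so does the limit. The along-the-trajectory quantities are then constant: the output is $y\equiv B^*x^\star=y^\star$, and the applied input is $u\equiv P_F(u^\star)=u^\star\in F$.

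There is no real obstacle here. The only point needing care is fixing what the second component $0$ refers to, and the natural reading is the vanishing closed-loop vector field $-M_\mathrm{cl}(x^\star)=0$. If it were instead meant as the output deviation, it also vanishes, since $y-y^\star=B^*(x^\star-x^\star)=0$.
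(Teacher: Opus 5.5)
Your proposal is correct and matches the paper, which derives the corollary directly from the first inclusion of Lemma~\ref{lem:zeros}, i.e.\ $M_\mathrm{cl}(x^\star)=M(x^\star)-BP_F(u^\star)=0$. Your extra argument that $S_\mathrm{cl}(t)x^\star=x^\star$, via uniqueness or the exponential formula, is valid but optional and is essentially Lemma~\ref{lem:van}; also note that since the paper places $0$ in $U$, it most naturally denotes the zero external input of the closed loop (by analogy with $(x^\star,u^\star)$), so the condition to check is $-M_\mathrm{cl}(x^\star)+B\cdot 0=0$, which is exactly what you prove.
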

Since~\eqref{eq:closed_loop_system} is governed by the operator $M_\mathrm{cl}$ defined in~\eqref{eq:M_cl}, and as the latter is maximal monotone due to Lemma~\ref{lem:mcl_maxmon}, there exists a semigroup $(S_\mathrm{cl}(t))_{t \ge 0} : \overline{\dom M} \to \overline{\dom M}$ of nonlinear nonexpansive mappings. The strong (and mild) solutions of~\eqref{eq:closed_loop_system} are given by
\begin{equation*}
     x(t) = S_\mathrm{cl}(t)x_0\quad t\ge 0. 
\end{equation*}
We now focus on the asymptotic behavior of the closed-loop semigroup $S_\mathrm{cl}(t)$. To this end, we first recall a result from \cite[Lem. A.9]{vanspranghe2025projected}.
\begin{lem}\label{lem:van}
    Let $W: X \supset \dom{W} \to X$ be maximally monotone and $(T(t))_{t \ge 0}$ be the semigroup of nonlinear nonexpansive mappings on $\overline{\dom{W}}$ generated by $-W$. For $x^\star \in \overline{\dom{W}}$, the following are equivalent:
    \begin{itemize}
        \item[(i)] $T(t)x^\star=x^\star$ for all $t \geq 0$;
        \item[(ii)] $x^\star \in \zer W$.
    \end{itemize}
\end{lem}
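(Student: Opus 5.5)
The plan is to prove both implications directly from the exponential-formula representation of the semigroup and from the fact that strong solutions starting in the domain satisfy the evolution equation almost everywhere.

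\emph{(ii) $\Rightarrow$ (i).} Let $x^\star \in \zer W$, so $x^\star\in\dom W$ and $W(x^\star)=0$. For every $\lambda>0$ we have $(I+\lambda W)(x^\star)=x^\star$. Since $W$ is monotone, $(I+\lambda W)^{-1}$ is single-valued, so $(I+\lambda W)^{-1}x^\star=x^\star$. Iterating gives $(I+\tfrac{t}{n}W)^{-n}x^\star=x^\star$ for all $n$, and \eqref{eq:semigroup} then yields $T(t)x^\star=x^\star$ for all $t\ge 0$. Alternatively, the constant function $x(t)\equiv x^\star$ is a strong solution of $\dot x=-W(x)$, and uniqueness of strong solutions gives $T(t)x^\star=x^\star$.

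\emph{(i) $\Rightarrow$ (ii).} This is the harder direction, because $x^\star$ is only assumed to lie in $\overline{\dom W}$, and we must show $x^\star\in\dom W$ and $W(x^\star)=0$. I would use monotonicity against arbitrary test points. Fix $(z,w)$ with $z\in\dom W$ and $w=W(z)$. The standard estimate for a strong solution $u(t)=T(t)z$ is
\[
\tfrac12\tfrac{\mathrm d}{\mathrm dt}\|u(t)-x^\star\|^2 = \langle -W(u(t)),u(t)-x^\star\rangle,
\]
but this needs a reference point in the domain. Instead I would use the integral (Bénilan) inequality for mild solutions: for any $y\in\overline{\dom W}$ and any $(z,w)$ in the graph of $W$,
\[
\tfrac12\|T(t)y-z\|^2-\tfrac12\|y-z\|^2\le \int_0^t\langle w\cdot(-1),T(s)y-z\rangle\,ds ,
\]
that is, $\le -\int_0^t \langle w, T(s)y-z\rangle\,ds$. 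This follows by passing to the limit in the resolvent scheme, using $\langle W(x_k)-w,x_k-z\rangle\ge0$ at each implicit Euler step. Taking $y=x^\star$ with $T(s)x^\star=x^\star$, the left side vanishes, so $0\le -t\langle w,x^\star-z\rangle$, i.e. $\langle 0-w,x^\star-z\rangle\ge 0$ for every $(z,w)$ in the graph of $W$.

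By maximality of $W$, the pair $(x^\star,0)$ must then belong to the graph of $W$; otherwise the graph could be extended monotonically. Hence $x^\star\in\dom W$ and $W(x^\star)=0$, i.e. $x^\star\in\zer W$.

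The main obstacle is justifying the integral inequality for mild solutions. I would derive it from the discrete scheme. With $x_k=(I+hW)^{-1}x_{k-1}$, we have $W(x_k)=(x_{k-1}-x_k)/h$, and monotonicity gives
\[
\langle x_{k-1}-x_k-hw,\,x_k-z\rangle\ge0.
\]
Combined with $\langle x_{k-1}-x_k,x_k-z\rangle\le \tfrac12\|x_{k-1}-z\|^2-\tfrac12\|x_k-z\|^2$, summing over $k$ and letting $h=t/n\to0$ via \eqref{eq:semigroup} yields the inequality.
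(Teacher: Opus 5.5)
The paper does not prove this lemma itself; it imports it from \cite[Lem.~A.9]{vanspranghe2025projected}. Your argument is therefore a genuinely different, self-contained route, and it is correct.

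Direction (ii)$\Rightarrow$(i) follows immediately from $(I+\tfrac tn W)^{-1}x^\star=x^\star$ and \eqref{eq:semigroup}. For (i)$\Rightarrow$(ii), the strength of your approach is that inserting the stationary mild trajectory $T(s)x^\star\equiv x^\star$ into the B\'enilan inequality kills the left-hand side and makes the integrand constant. You thus obtain $\langle 0-w,x^\star-z\rangle\ge 0$ for every graph point without knowing a priori that $x^\star\in\dom W$. A common alternative is Brezis' characterization that $x\in\overline{\dom W}$ lies in $\dom W$ iff $\liminf_{h\downarrow 0}\|T(h)x-x\|/h<\infty$, with $\|W(x)\|=\lim_{h\downarrow0}\|T(h)x-x\|/h$. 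That gives the result in one line, but it rests on a deeper piece of semigroup theory. Yours uses only monotonicity, maximality and the integral inequality.

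Two steps should be tightened.

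First, the limit $h=t/n\to0$ in your discrete scheme needs more than the endpoint formula \eqref{eq:semigroup}. It requires convergence of the whole Euler polygon, $J_{t/n}^k y\to T(kt/n)y$ uniformly in $k\le n$, which is the Crandall--Liggett estimate. It is simpler to use only what the paper recalls:
\begin{itemize}
\item For $y\in\dom W$, $u=T(\cdot)y$ is a strong solution, so almost everywhere $\tfrac{\mathrm d}{\mathrm dt}\tfrac12\|u-z\|^2=\langle -W(u),u-z\rangle\le\langle -w,u-z\rangle$. Integrate this.
\item Then pass to $y\in\overline{\dom W}$ by density, using $\sup_{s\le t}\|T(s)y_m-T(s)y\|\le\|y_m-y\|$.
\end{itemize}
This density step is exactly what resolves your worry about needing a point in the domain.

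Second, since $W$ is single-valued, make the maximality step explicit through the surjectivity characterization stated in the paper. Pick $z_0\in\dom W$ with $z_0+W(z_0)=x^\star$. Testing your inequality with $(z_0,W(z_0))$ gives $-\|x^\star-z_0\|^2\ge0$. Hence $x^\star=z_0\in\dom W$ and $W(x^\star)=0$.
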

The preceding lemma shows that the set of fixed points of the semigroup associated with~\eqref{eq:closed_loop_system} coincides with the zero set of the governing operator $M_\mathrm{cl}$. Having established this convergence to an equilibrium 
we show under a detectability-like assumption, that this equilibrium coincides with the unique zero $x^\star$ of $M$, see~\eqref{eq:eqM}.

Various conditions guaranteeing convergence of nonlinear semigroups generated by dissipative operators have been established in the literature; see, for example, \cite{slemrod1970asymptotic,dafermos1973asymptotic}. In the present work, we employ a convergence criterion due to \cite{pazy1978strong}, which provides a convenient tool for establishing strong convergence of trajectories. This is advantageous since classical LaSalle-type arguments typically require additional compactness assumptions, such as compactness of the resolvent or precompactness of trajectories \cite{CurtainZwart2020}.

For a maximally monotone operator $W$, we denote the projection of some $x \in \dom{W}$ onto the (nonempty and closed) set $\zer W$ by $Px$. For the reader's convenience, we recall the main result of \cite{pazy1978strong}.
\begin{thm}\label{thm:pazy}
   Let $W:X \supset \dom{W} \to X$ be maximally monotone and let $(T(t))_{t \ge 0}$ be the semigroup generated by $-W$. Assume that $W$ satisfies:
    \begin{itemize}
        \item[(i)] $\zer W \neq \emptyset $;
        \item[(ii)] 
        For any $x_0\in D(W)$ and $(t_n)_{n\in \mathbb{N}}\subset [0,\infty)$ with $t_n \to \infty$ for $n\to \infty$, the convergence 
        \begin{equation*}
            \lim_{n \to \infty} \langle W(T(t_n)x_0), T(t_n)x_0 - PT(t_n)x_0 \rangle_X=0
        \end{equation*}
        implies 
        \begin{equation*}
            \liminf_{n \to \infty} \operatorname{dist}(T(t_n)x_0, \zer W)=0.
        \end{equation*}
    \end{itemize}
    Then, for every $x_0 \in \overline{\dom{W}}$, the trajectory $T(t)x_0$ converges as $t \to \infty$ to a fixed point of $(T(t))_{t\geq 0}$ in the sense of Lemma~\ref{lem:van}~(i).
\end{thm}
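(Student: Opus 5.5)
The strategy is the classical one for nonexpansive semigroups: use Fejér monotonicity of trajectories with respect to $\zer W$, then upgrade the liminf in (ii) to convergence of the whole trajectory. Fix $x_0\in\dom W$ and write $x(t)=T(t)x_0$. For any $z\in\zer W$, Lemma~\ref{lem:van} gives $T(t)z=z$, so by nonexpansivity \eqref{eq:nonexp} the map $t\mapsto\|x(t)-z\|_X$ is nonincreasing. Hence $d(t):=\operatorname{dist}(x(t),\zer W)=\|x(t)-Px(t)\|_X$ is nonincreasing as well; here $\zer W$ is closed and convex, since $W$ is maximally monotone.

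\textbf{Step 1: relate $\langle W(x(t)),x(t)-Px(t)\rangle_X$ to the decay of $d$.} For a strong solution, $\frac{\mathrm d}{\mathrm dt}\tfrac12\|x(t)-z\|_X^2=-\langle W(x(t)),x(t)-z\rangle_X$ for a.e.\ $t$ and fixed $z$. Monotonicity gives $\langle W(x),x-z\rangle_X\ge \langle W(z),x-z\rangle_X = 0$, so this quantity is nonnegative. Fix $s$ and set $z=Px(s)$. For $t\ge s$ we have $d(t)\le\|x(t)-z\|_X$, and integrating yields
\[
\tfrac12 d(t)^2\le \tfrac12 d(s)^2-\int_s^t\langle W(x(\tau)),x(\tau)-Px(s)\rangle_X\,d\tau .
\]
I would rather work with $h(t)=\langle W(x(t)),x(t)-Px(t)\rangle_X\ge0$. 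The map $t\mapsto\tfrac12 d(t)^2$ is Lipschitz on $[\delta,\infty)$, and a Danskin-type argument (the squared distance to a closed convex set is differentiable with gradient $x-Px$) gives
\[
\tfrac{\mathrm d}{\mathrm dt}\tfrac12 d(t)^2=\langle \dot x(t),x(t)-Px(t)\rangle_X=-h(t)\quad\text{for a.e. } t.
\]
Hence $\int_0^\infty h(t)\,dt\le\tfrac12 d(0)^2<\infty$. This integrability gives a sequence $t_n\to\infty$ with $h(t_n)\to0$. I expect a subtlety here: $t_n$ must lie in the full-measure set where the derivative identity holds and $W(x(t_n))$ is defined, which is fine for strong solutions with $x_0\in\dom W$.

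\textbf{Step 2: conclude $d(t)\to0$ and Cauchy-ness.} By (ii), $\liminf d(t_n)=0$. Since $d$ is nonincreasing, $d(t)\to0$. Now take $s<t$. Using Fejér monotonicity with $z=Px(s)$, we get $\|x(t)-Px(s)\|_X\le d(s)$, so $\|x(t)-x(s)\|_X\le 2d(s)\to0$. Thus $x(t)$ is Cauchy and converges to some $x_\infty$. Since $d(x_\infty)=\lim d(t)=0$ and $\zer W$ is closed, $x_\infty\in\zer W$, which is a fixed point by Lemma~\ref{lem:van}.

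\textbf{Step 3: extend to $x_0\in\overline{\dom W}$.} Approximate $x_0$ by $x_0^k\in\dom W$. Nonexpansivity gives $\|T(t)x_0-T(t)x_0^k\|_X\le\|x_0-x_0^k\|_X$ uniformly in $t$. Therefore $T(t)x_0$ is also Cauchy as $t\to\infty$, by a standard $3\varepsilon$ argument, and its limit lies in the closed set $\zer W$.

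The main obstacle is Step 1. One needs the a.e.\ differentiability of $d(t)^2$ along a strong solution, with $W(x(t))$ understood as the minimal-norm section, which coincides with $-\dot x(t^+)$. One also needs to guarantee that the sequence $t_n$ extracted from $\int h<\infty$ is admissible in hypothesis (ii). If the Danskin computation is delicate, I would fall back on the integrated inequality above with $z=Px(s)$ frozen. It yields $\int_s^\infty\langle W(x),x-Px(s)\rangle\,d\tau<\infty$, and one then compares $Px(s)$ with $Px(\tau)$ using $\|Px(\tau)-Px(s)\|\le\|x(\tau)-x(s)\|$.
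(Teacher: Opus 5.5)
Your argument is correct and is essentially the proof of \cite[Theorem 2.1]{pazy1978strong} that the paper invokes without reproducing it: Fejér monotonicity with respect to $\zer W$, integrability of $t\mapsto\langle W(T(t)x_0),T(t)x_0-PT(t)x_0\rangle_X$ via the derivative of $\tfrac12\operatorname{dist}(\cdot,\zer W)^2$, extraction of sampling times, monotonicity of the distance, the Cauchy estimate $\|x(t)-x(s)\|_X\le 2d(s)$, and a density argument; it also makes explicit the paper's remark that hypothesis (ii) is only needed along trajectory samples. The obstacle you flag is harmless. For $x_0\in\dom W$ the strong solution is Lipschitz on $[0,\infty)$ (so you can take $\delta=0$) and stays in $\dom W$ for every $t$, so $h$ is defined everywhere and the identity for $\tfrac{\mathrm d}{\mathrm dt}\tfrac12 d^2$ holds a.e.; moreover, (ii) places no restriction on $t_n$ beyond $t_n\to\infty$, so every such sequence is admissible.
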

We note that the previously stated theorem resembles a slight modification of \cite[Theorem 2.1]{pazy1978strong}. In its original version, the convergence condition (ii) has to be ensured for any bounded sequence $(x_n)_{n\in \mathbb{N}}$ that renders $(W(x_n))_{n\in \mathbb{N}}$ bounded. However, as becomes clear when inspecting the proof of \cite[Theorem 2.1]{pazy1978strong}, the weaker condition (ii) stated above utilizing sequences emerging from trajectory samples is sufficient. 

We now state the main result of this paper. Besides monotonicity of the governing main operator, the main ingredient is the following detectability-like property.
\begin{defn}
    We say that the maximally monotone system $(M,B)$ given by \eqref{eq:monotone_phs} with $(S(t))_{t\geq 0}$ being the nonlinear semigroup generated by $-M$ fulfills the \textit{vanishing output
vanishing state (VOVS) property at $x^\star$} if for all $x_0 \in \overline{\dom{M}}$
\begin{equation*}
   \lim_{t \to \infty} B^*(S(t)x_0 - x^\star) = 0 \qquad \text{implies} \qquad   \lim_{t \to \infty}  S(t)x_0 = x^\star.
\end{equation*}
\end{defn}
In the subsequent Subsection~\ref{subsec:detec}, we show that for linear systems, this condition is implied by detectability. The next result shows that the proposed saturated feedback law is asymptotically stabilizing when assuming the VOVS property for the closed-loop system.
\begin{thm}\label{thm:main}
  Let $F \subset U$ be closed, convex and let a controlled steady state $(x^\star,u^\star) \in D(M)\times \operatorname{int} F$ be given. Further, let the system \eqref{eq:closed_loop_system} fulfill the VOVS property at $x^\star$. Let $(S_\mathrm{cl}(t))_{t \ge 0}$ be the semigroup generated by $-M_\mathrm{cl}$ as in~\eqref{eq:M_cl}. Then, for all $x_0 \in \overline{\dom{M}}$, 
\begin{equation*}
    S_\mathrm{cl}(t)x_0 \to x^\star\qquad \text{as} \qquad t \to \infty. 
\end{equation*}
\end{thm}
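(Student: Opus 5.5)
The plan is to show, via a Lyapunov-type energy estimate, that the output error $B^*(S_\mathrm{cl}(t)x_0-x^\star)$ tends to zero, and then invoke the VOVS property of the closed loop. (Theorem~\ref{thm:pazy} is an alternative route; I would use it only if the direct argument stalls.)

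\textbf{Step 1: dissipation inequality.} Take first $x_0\in\dom M$, so that $x(t)=S_\mathrm{cl}(t)x_0$ is a strong solution. Set $\tilde x=x-x^\star$ and $v(t)=P_F(u^\star-B^*\tilde x(t))-u^\star$. Differentiate $V(t)=\tfrac12\|\tilde x(t)\|_X^2$ and use $M(x^\star)=Bu^\star$, the monotonicity of $M$, and $P_F(u^\star)=u^\star$. Exactly as in the proof of Lemma~\ref{lem:zeros}, but now with the firm nonexpansivity of Lemma~\ref{lem:P_F_firmly_nonexp} kept quantitatively, this gives
\begin{equation*}
\tfrac{\mathrm d}{\mathrm dt}V(t)\le \langle v(t),B^*\tilde x(t)\rangle_U\le -\|v(t)\|_U^2 .
\end{equation*}
Hence $V$ is nonincreasing, the trajectory is bounded (with $\|\tilde x(t)\|\le\|\tilde x(0)\|$), and
\begin{equation*}
\int_0^\infty\|v(t)\|_U^2\,\mathrm dt\le V(0)<\infty .
\end{equation*}

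\textbf{Step 2: from $v\in L^2$ to $B^*\tilde x(t)\to0$.} Since $u^\star\in\operatorname{int}F$, there is $\varepsilon>0$ with $\overline{B_\varepsilon(u^\star)}\subset F$. If $\|B^*\tilde x\|\le\varepsilon$, then $u^\star-B^*\tilde x\in F$ and so $v=-B^*\tilde x$. If $\|B^*\tilde x\|>\varepsilon$, projection onto a convex set containing the ball still gives a lower bound $\|v\|\ge \varepsilon\,\|B^*\tilde x\|/\|B^*\tilde x\|$-type estimate. Concretely, the point $u^\star-\varepsilon B^*\tilde x/\|B^*\tilde x\|$ lies in $F$, and with the firm nonexpansive inequality and boundedness of $\tilde x$ one gets $\|v\|\ge c\min\{\|B^*\tilde x\|,\varepsilon\}$ for a constant $c>0$ depending on the bound of $\tilde x$. (Alternatively, use $\langle v,B^*\tilde x\rangle\le-\|v\|^2$ together with the monotonicity of $s\mapsto\|P_F(u^\star-sw)-u^\star\|$ along rays.) Therefore $g(t):=\min\{\|B^*\tilde x(t)\|,\varepsilon\}^2$ is integrable.

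To upgrade integrability to convergence, I need uniform continuity of $t\mapsto B^*\tilde x(t)$. For $x_0\in\dom M$, nonexpansivity yields $\|x(t+h)-x(t)\|\le\|S_\mathrm{cl}(h)x_0-x_0\|\le h\|M_\mathrm{cl}(x_0)\|$, so the trajectory is Lipschitz. A Barbalat-type argument then gives $g(t)\to0$, hence $B^*(x(t)-x^\star)\to0$.

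\textbf{Step 3: conclusion.} The VOVS property of the closed-loop system~\eqref{eq:closed_loop_system}, whose semigroup is $S_\mathrm{cl}$, yields $S_\mathrm{cl}(t)x_0\to x^\star$ for all $x_0\in\dom M$. For $x_0\in\overline{\dom M}$, approximate by $z_0\in\dom M$ and use~\eqref{eq:nonexp}:
\begin{equation*}
\|S_\mathrm{cl}(t)x_0-x^\star\|\le\|x_0-z_0\|+\|S_\mathrm{cl}(t)z_0-x^\star\|,
\end{equation*}
which gives convergence for every $x_0\in\overline{\dom M}$. (VOVS is in fact stated for all of $\overline{\dom M}$, but Step 2 uses Lipschitz continuity, so the density argument is the cleaner way to finish.)

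\textbf{Main obstacle.} The delicate step is Step 2: obtaining a coercive lower bound of $\|v\|$ in terms of the saturated output. This must use the interiority of $u^\star$ in an essential way; the bound fails if $u^\star\in\partial F$. The other delicate point is the Barbalat argument in infinite dimensions. Both seem manageable via the explicit ball $B_\varepsilon(u^\star)\subset F$ and the Lipschitz estimate on trajectories.
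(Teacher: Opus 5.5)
Your proof is correct, and it takes a genuinely different route from the paper.

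The paper verifies Pazy's criterion (Theorem~\ref{thm:pazy}) for $W=M_\mathrm{cl}$. At sampled points $x_n=S_\mathrm{cl}(t_n)x_0$ it uses $\zer{M_\mathrm{cl}}\subset\{x^\star\}+\ker B^*$ (Lemma~\ref{lem:zeros}) to bound $\langle M_\mathrm{cl}(x_n),x_n-Px_n\rangle_X\ge\|P_F(u^\star-B^*(x_n-x^\star))-u^\star\|_U^2$. It then deduces $B^*(x_n-x^\star)\to0$ from $u^\star\in\operatorname{int}F$, and finally appeals to VOVS and to nonexpansivity of the semigroup.

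You use the same core inequality (monotonicity of $M$ plus firm nonexpansiveness of $P_F$), but with $x^\star$ in place of $Px_n$ and integrated along the trajectory. You then pass from square-integrability to convergence via Barbalat's lemma and the Lipschitz bound $\|S_\mathrm{cl}(t+h)x_0-S_\mathrm{cl}(t)x_0\|_X\le h\|M_\mathrm{cl}(x_0)\|_X$. This buys several things:
\begin{itemize}
\item The argument is elementary and self-contained: no Pazy theorem, no projection onto $\zer{M_\mathrm{cl}}$, no Lemma~\ref{lem:van}.
\item It gives the quantitative by-product $\int_0^\infty\|u(t)-u^\star\|_U^2\,\mathrm{d}t\le\tfrac12\|x_0-x^\star\|_X^2$.
\item Most importantly, it produces $B^*(S_\mathrm{cl}(t)x_0-x^\star)\to0$ as $t\to\infty$ in continuous time, which is exactly the premise of the VOVS property as defined.
\end{itemize}
The paper's sampled argument only obtains this convergence along the particular sequence $(t_n)$ before invoking VOVS. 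Your Step~2 therefore supplies precisely what that invocation requires. Conversely, the paper's route would be complete as written under a sequence-wise variant of VOVS, and it places the result within the general convergence theory of nonlinear contraction semigroups.

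One refinement in Step~2: the coercivity bound holds with $c=1$ and needs no bound on $\tilde x$. Take $w=B^*\tilde x$ with $\|w\|_U>\varepsilon$, and set $p=P_F(u^\star-w)$ and $q=u^\star-\varepsilon w/\|w\|_U\in F$. Firm nonexpansiveness (Lemma~\ref{lem:P_F_firmly_nonexp}) applied to $u^\star-w$ and $q$ gives $(1-\varepsilon/\|w\|_U)\langle p-q,-w\rangle_U\ge\|p-q\|_U^2\ge0$. Hence $\langle p-u^\star,-w/\|w\|_U\rangle_U\ge\varepsilon$, and thus $\|v\|_U\ge\varepsilon$. Together with $v=-w$ when $\|w\|_U\le\varepsilon$, this yields $\|v\|_U\ge\min\{\|w\|_U,\varepsilon\}$, which is all your Barbalat argument needs.
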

\begin{proof}  
The proof strategy is to verify the conditions of Theorem~\ref{thm:pazy} for $W=M_\mathrm{cl}$. As $x^\star\in \zer{ M_{\mathrm{cl}}}$ due to Lemma~\ref{lem:zeros}, condition (i) is satisfied. Thus, we remain to verify the condition (ii). To this end, in a first step we verify that the convergence condition (ii) of Theorem \ref{thm:pazy} is satisfied for the maximal monotone operator $M_\mathrm{cl}$ defined in \eqref{eq:M_cl}. In a second step, we use the VOVS property of \eqref{eq:closed_loop_system} at  $x^\star$ to show that the strong limit of the semigroup $(S_\mathrm{cl}(t))_{t \ge 0}$ coincides with the unique equilibrium $x^\star \in \zer{M_\mathrm{cl}}$, cf.~\eqref{eq:eqM}. 

Let $P$ be the projection onto 
\begin{equation*}
    \zer{M_\mathrm{cl}}\coloneqq\left\{x \in \dom{M}  :  M(x)=BP_F(u^\star - B^*(x-x^\star))\right\}.
\end{equation*}
Let $x_0\in D(M)$, $(t_n)_{n\in \mathbb{N}}\subset [0,\infty)$ with $t_n \to \infty$ for $n\to \infty$,  and denote $x_n := S(t_n)x_0 \subset D(M)$. 
Assume that the condition of Theorem~\ref{thm:pazy}~(ii) holds, that is,
\begin{equation}\label{eq:null_sequence}
    \langle M_\mathrm{cl}(x_n) , x_n - Px_n \rangle \to 0\qquad \text{as} \qquad n \to \infty.
\end{equation}
 Due to the second inclusion of \eqref{eq:inclusionlemzeros} of Lemma \ref{lem:zeros}, $v_n := Px_n -x^\star \in \ker B^*$ for each $n \in \N$.
Along the lines of the proof of Lemma \ref{lem:zeros}, we set $\tilde{x}_n=x_n-x^\star$ and we obtain
\begin{align*}
       \langle M_\mathrm{cl}(x_n) , x_n - Px_n \rangle_X 
     & =  \langle M_\mathrm{cl}(x_n) - M_\mathrm{cl}(Px_n), x_n - Px_n \rangle_X \\
     & =  \langle M(x_n) - M(x^\star + v_n) , x_n - (x^\star + v_n) \rangle_X\, \\
     & \hphantom{=}+ \langle -BP_F(u^\star - B^*\tilde{x}) \hspace{-0.5mm}+\hspace{-0.5mm}BP_F(u^\star  - B^*v_n ) , x_n - (x^\star + v_n)\rangle_X \\
     & \geq \langle P_F(u^\star  - B^*\tilde{x})  - P_F(u^\star), - B^*x_n - (-B^*(x^\star + v_n)) \rangle_U \\
     & = \langle P_F(u^\star-B^*(x_n-x^\star)) - P_F(u^\star), - B^*(x_n - x^\star) \rangle_U \\
     &  \geq \| P_F(u^\star-B^*(x_n-x^\star)) - u^\star \|_U^2
\end{align*}
where we used Lemma~\ref{lem:P_F_firmly_nonexp} in the last inequality.
Thus, due to~\eqref{eq:null_sequence}, $ p_n := P_F(u^\star-B^*(x_n-x^\star)) \to u^\star$ as $n \to \infty$. Since $u^\star \in \operatorname{int} F$, there exists an $r > 0$ such that the open ball $B_r(u^\star) \subset F$. Hence, for sufficiently large $n \in \N$ we have
\begin{equation}\label{eq:p_n}
    p_n  \in B_r(u^\star) \subset F.
\end{equation}
Hence, for $n\in \mathbb{N}$ large enough, $p_n\in F$ such that 
\begin{equation*}
    p_n =P_F(u^\star-B^*(x_n-x^\star))=u^\star-B^*(x_n-x^\star)
\end{equation*}
and thus due to $p_n\to u^\star$ and resubstituting $x_n = S_\mathrm{cl}(t_n)x_0$,
\begin{equation*}
    B^*(S_\mathrm{cl}(t_n)x_0-x^\star) \to 0
\end{equation*}
as $n \to \infty$.
Then, the assumed VOVS property of \eqref{eq:closed_loop_system} at $x^\star$ yields
\begin{equation}\label{eq:limit_discrete}
    \lim_{n \to \infty} S_\mathrm{cl}(t_n)x_0 = x^\star \in \zer{ M_\mathrm{cl}}
\end{equation}
which proves the condition (ii) of Theorem~\ref{thm:pazy}. Hence, $S_\mathrm{cl}(t)x_0$ converges strongly to a fixed point of the semigroup. To see that this fixed point is indeed also the limit of the sampled trajectory~\eqref{eq:limit_discrete}, let $n\in \mathbb{N}$ and $t\geq t_n$. Then
\begin{align*}
    \|S_\mathrm{cl}(t)x_0 - x^\star\| &= \|S_\mathrm{cl}(t_n + (t-t_n))x_0 - x^\star\| \\
    &= \|S_\mathrm{cl}(t-t_n)S_\mathrm{cl}(t_n)x_0 - S_\mathrm{cl}(t-t_n)x^\star\| \\
    &\leq \|S_\mathrm{cl}(t_n)x_0 - x^\star\|,
\end{align*}
where in the last inequality we used the nonexpansivity of the semigroup \eqref{eq:nonexp}. This shows that $S_\mathrm{cl}(t)x_0 \to x^\star$ for $t\to \infty$.
\end{proof}

\subsection{Detectability and the VOVS property}\label{subsec:detec}
\noindent In this part, we relate the VOVS property to detectability. We show sufficient conditions for the VOVS property being the central assumption of our main result Theorem~\ref{thm:main} in the particular case of linear control systems. We note that, even in this case, the results proving asymptotic stability with the suggested output feedback subject to control constraints is novel. We note that, even for linear systems, the saturated feedback leads to a nonlinear closed-loop system.

 By \cite[Def. 8.1.1]{CurtainZwart2020}, a linear control system $(A,B)=(A,B,B^*)$ is \textit{exponentially detectable} if there exists $L \in L(U,X)$ such that $-A+LB^*$ generates an exponentially stable semigroup $(T_{-A+LB^*}(t))_{t \ge 0 }$, i.e., 
   \begin{equation*}
       \|T_{-A+LB^*}(t)\| \leq ce^{-\omega t}, \qquad t \ge 0,
   \end{equation*}
for some $c\ge 1$, $\omega > 0$. Note that, by linearity of $A$, the property of $(A,B)$ being exponentially detectable is independent of the controlled equilibrium. 

The following Proposition~\ref{prop:lin_det_VOVS} shows that for linear systems, detectability implies the VOVS notion for the \emph{open-loop} system. Then, in the subsequent Proposition~\ref{prop:exp_det}, we show that this also implies VOVS for the nonlinear \emph{closed-loop} system.
\begin{prop}\label{prop:lin_det_VOVS}
     Let $(A,B)$ be a maximal monotone control system where $A: X \supset \dom{A} \to X$ is a linear operator and let $(x^\star, u^\star)$ be a controlled equilibrium of $(A,B)$. 
     
     If $(A,B)$ is exponentially detectable, then $(A,B)$ fulfills the VOVS
 property at $x^\star$.
\end{prop}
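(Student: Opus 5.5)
Writing $S(t)$ for the linear contraction semigroup generated by $-A$ and $x(t)=S(t)x_0$, I plan to work with the error $e(t)=x(t)-x^\star$ and exploit linearity. Because $Ax^\star=Bu^\star$, the error satisfies (in the mild sense) $\dot e = -Ae - Bu^\star$. I will then rewrite this using the detectability gain $L$: adding and subtracting $LB^*e$ gives
\begin{equation*}
\dot e = (-A+LB^*)e - LB^*e - Bu^\star .
\end{equation*}
The variation-of-parameters formula for the exponentially stable semigroup $T:=T_{-A+LB^*}$ (bounded perturbation of a linear generator) then yields
\begin{equation*}
e(t) = T(t)e(0) - \int_0^t T(t-s)\bigl(LB^*e(s) + Bu^\star\bigr)\,ds .
\end{equation*}

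The first term decays like $ce^{-\omega t}\|e(0)\|$. For the term involving $y_e(s):=B^*e(s)$, which tends to $0$ by hypothesis, I will use the standard convolution estimate: split $\int_0^t=\int_0^{t/2}+\int_{t/2}^t$. Boundedness of $y_e$ follows from nonexpansivity, since $\|e(s)\|$ stays bounded. The first piece is then bounded by $c\|L\|\sup\|y_e\|\,e^{-\omega t/2}/\omega$, and the second by $(c\|L\|/\omega)\sup_{s\ge t/2}\|y_e(s)\|$. Both tend to $0$.

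The main obstacle is the constant forcing term $-\int_0^t T(t-s)Bu^\star\,ds$, which converges to the nonzero quantity $-(-A+LB^*)^{-1}$ applied to $-Bu^\star$ unless it cancels. I expect this to be an artifact of the linear setting: for linear $A$ with $x(t)=S(t)x_0$ solving the homogeneous equation, $x^\star$ is a zero of the generator only if $Bu^\star=Ax^\star$. My first attempt will therefore be to interpret the VOVS property for the shifted dynamics $\dot x=-Ax+Bu^\star$, i.e.\ the semigroup generated by $-(A-Bu^\star)$, which has $x^\star$ as its equilibrium. For this semigroup the error obeys exactly $\dot e=-Ae$, so the constant term disappears.

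If instead one insists on the homogeneous semigroup, I will exploit the fact that $A$ is linear and monotone. Then $x(t)\to 0$ whenever $B^*x(t)\to 0$, by the same argument with $u^\star=0$. From $B^*x(t)\to B^*(0)$ and the hypothesis $B^*x(t)\to B^*x^\star$, I get $B^*x^\star=0$, so $\langle Ax^\star,x^\star\rangle=\langle u^\star,B^*x^\star\rangle=0$. To conclude $x(t)\to x^\star$ I would then need $x^\star=0$. I would try to extract this from $e(t)\to$ the limit of the integral being consistent with $x(t)\to0$ (in this case $x(t)\to0$ holds, since $B^*x(t)\to0$), forcing $x^\star=0$. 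In either reading, the core of the proof is the convolution estimate together with the decomposition using $L$.
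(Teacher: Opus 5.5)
Your first reading is the right one, and it is in substance the paper's own argument. The paper also passes to $x_s=x-x^\star$ with $\tfrac{\mathrm{d}}{\mathrm{d}t}x_s=-Ax_s$, applies variation of parameters with $T_{-A+LB^*}$ and forcing $-Ly_s$, and transfers back via $S_s(t)x_{s,0}=S(t)x_0-x^\star$. For linear $A$ that identity holds only if $S(t)x^\star=x^\star$, i.e.\ $Bu^\star=Ax^\star=0$, or if the trajectory is the one driven by the constant input $u^\star$. So the paper tacitly proves exactly your first reading. The one difference is the final step. The paper stops at the OSS bound $\|x_s(t)\|\le ce^{-\omega t}\|x_{s,0}\|+\tfrac{c\|L\|}{\omega}\|y_s\|_{L^\infty(0,t;U)}$ and cites \cite[Cor.~4.8]{chen2025lyapunov} for OSS $\Rightarrow$ VOVS. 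Your split $\int_0^{t/2}+\int_{t/2}^t$, with boundedness of $y_e$ from contractivity, proves that step directly and is more self-contained.

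You should drop the second branch rather than keep it as a fallback. The step ``from $B^*x(t)\to B^*(0)$'' is circular, since you only know $B^*x(t)\to B^*x^\star$. No repair exists either: your own decomposition shows that for the unforced semigroup, $B^*(S(t)x_0-x^\star)\to 0$ forces $S(t)x_0-x^\star\to(-A+LB^*)^{-1}Bu^\star$, which is nonzero whenever $Bu^\star\neq 0$.

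Here is a concrete counterexample to the unforced reading. Let $X=\mathbb{R}^2$, $U=\mathbb{R}$, $A(x_1,x_2)=(-x_2,x_1)$ (skew, hence maximal monotone), and $Bu=(u,0)$, so $B^*x=x_1$. With $Lu=(-u,0)$, the matrix of $-A+LB^*$ has characteristic polynomial $\lambda^2+\lambda+1$, so $(A,B)$ is exponentially detectable. The pair $(x^\star,u^\star)=((0,1),-1)$ is a controlled equilibrium with $B^*x^\star=0$. For $x_0=0$ the unforced trajectory is identically zero, so $B^*(S(t)x_0-x^\star)=0$ for all $t$, while $S(t)x_0\not\to x^\star$.

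Hence the proposition is true only in your first reading; the two readings coincide exactly when $Bu^\star=0$. You should state that interpretation explicitly as the one being proved.
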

\begin{proof}
The first part of the proof adapts the argument of \cite[Proposition 2.6]{krichman2001input} to the infinite-dimensional setting. 
   By exponential detectability of $(A,B)$, there exists $L \in L(U,X)$ such that $-A+LB^*$ generates an exponentially stable semigroup $(T_{-A+LB^*}(t))_{t \ge 0 }$. Define the shifted variables 
    \begin{equation*}
        x_s(t) \coloneqq x(t)- x^\star, \qquad  u_s(t) \coloneqq u(t)- u^\star, \qquad y_s(t) \coloneqq y(t) - y^\star.
    \end{equation*}
Define the observer-type system
   \begin{equation}\label{eq:observer}
       \tfrac{\mathrm{d}}{\mathrm{d}t} x_s(t)= -Ax_s(t)= -Ax_s(t) 
       + L(B^*x_s(t) - y_s(t)).
   \end{equation}
   For a fixed $x_{s,0}=x_0-x^\star \in \dom{A}$, the solution $x_s(t)=S_s(t)x_{s,0}$ of \eqref{eq:observer} satisfies the variation-of-parameters formula \eqref{eq:var_of_param}, that is,
   \begin{align*}
       S_s(t)x_{s,0} = T_{-A+LB^*}(t)x_{s,0} - \int_0^t T_{-A+LB^*}(t-s)(
       Ly_s(s)) \, \mathrm{d}s.
   \end{align*}
   For brevity, we write $\| x_{s,0}\|_X=\|x_{s,0}\|$ and $\|L\|_{L(U,X)}=\|L\|$. Taking norms and using exponential decay of $T_{-A+LB^*}(t)$, we get 
   \begin{align*}
        \|S_s(t)x_{s,0}\| 
      & \leq ce^{-\omega t} \|x_{s,0}\|  + \int_0^t ce^{-\omega (t-s)} 
       \|L\|\|y_s(s)\|  \, \mathrm{d}s \\
       &\leq   ce^{-\omega t} \|x_{s,0}\|  +c 
        \|L\|\|y_s\|_{L^\infty(0,t;U)}  \int_0^t e^{-\omega (t-s)}\, \mathrm{d}s,
   \end{align*}
for some $c\ge 1$, $\omega > 0$.
   Since $\int_0^t e^{-\omega (t-s)} = \tfrac{1-e^{-\omega t}}{\omega} \leq \tfrac{1}{\omega}$,
   \begin{equation*}
       \| S_s(t)x_{s,0} \| \leq  ce^{-\omega t} \|x_{s,0}\| + 
       \tfrac{c \|L\|}{\omega} \|y_s\|_{L^\infty(0,t;U)} .
   \end{equation*}
   Consequently, the system \eqref{eq:observer}, and with that also system $(A,B)$, is \textit{output-to-state stable (OSS)} in the sense of  \cite[Def. 2.2]{chen2025lyapunov}, see also \cite{sontag1997output}. It follows from \cite[Cor. 4.8]{chen2025lyapunov}, that $(A,B)$ satisfies the VOVS property at the origin. However, from 
   \begin{equation*}
       S_s(t)x_{s,0}=S(t)x_0 -x^\star, \qquad y_s(t) =y(t)-y^\star,
   \end{equation*}
   we conclude that $(A,B)$ satisfies the VOVS property at $x^\star$.
\end{proof}
We now show that, for linear control systems $(A,B)$, exponential detectability is sufficient to ensure that the closed-loop system $(A_{\mathrm{cl}},B)$ satisfies the VOVS property at $x^\star$. In particular, exponential detectability guarantees that the assumptions of Theorem~\ref{thm:main} are met.
\begin{prop}\label{prop:exp_det}
    Let $(A,B)$ be a maximal monotone control system where $A: X \supset \dom{A} \to X$ is a linear operator and let $(x^\star, u^\star)$ be a controlled equilibrium of $(A,B)$. 
    
    If $(A,B)$ is exponentially detectable, the nonlinear closed-loop system $(A_\mathrm{cl},B)$ with $A_\mathrm{cl}(x) = Ax - BP_F(u^\star-B^*(x-x^\star))$ fulfills the VOVS property at $x^\star$.
\end{prop}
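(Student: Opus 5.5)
We treat the closed-loop dynamics as the detectable linear system $(A,B)$ driven by the bounded input $u(t)=P_F(u^\star-B^*(x(t)-x^\star))$. We then repeat the output-to-state estimate from the proof of Proposition~\ref{prop:lin_det_VOVS}, now with an extra input term. Fix $x_0\in\overline{\dom A}$ and let $x(t)=S_\mathrm{cl}(t)x_0$. Assume $y_s(t):=B^*(x(t)-x^\star)\to 0$. Write $x_s=x-x^\star$ and $u_s(t)=P_F(u^\star-y_s(t))-u^\star$. Using $Ax^\star=Bu^\star$, the shifted state satisfies
\begin{equation*}
\tfrac{\mathrm d}{\mathrm dt}x_s=-Ax_s+Bu_s=(-A+LB^*)x_s-Ly_s+Bu_s,
\end{equation*}
where $L$ is the detectability gain. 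By the variation-of-parameters formula \eqref{eq:var_of_param}, applied with the linear generator $-A+LB^*$ and the forcing $-Ly_s+Bu_s$, we obtain
\begin{equation*}
x_s(t)=T_{-A+LB^*}(t)x_s(0)+\int_0^t T_{-A+LB^*}(t-s)\bigl(-Ly_s(s)+Bu_s(s)\bigr)\,\mathrm ds.
\end{equation*}

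The key observation is that the saturation disappears from the input term once the output is small. Nonexpansivity of $P_F$ (Lemma~\ref{lem:P_F_firmly_nonexp}) and $P_F(u^\star)=u^\star$ give $\|u_s(s)\|_U\le\|y_s(s)\|_U$. So the whole forcing is bounded by $(\|L\|+\|B\|)\|y_s(s)\|_U$. (Alternatively, since $u^\star\in\operatorname{int}F$, for large $s$ one even has $u_s=-y_s$ exactly, but the Lipschitz bound suffices.) Exponential stability, $\|T_{-A+LB^*}(t)\|\le ce^{-\omega t}$, then yields an OSS-type bound. For any $t_0\le t$,
\begin{equation*}
\|x_s(t)\|\le ce^{-\omega(t-t_0)}\|x_s(t_0)\|+\tfrac{c(\|L\|+\|B\|)}{\omega}\sup_{s\ge t_0}\|y_s(s)\|_U .
\end{equation*}
This follows by applying the formula from time $t_0$, using the semigroup property of the solution.

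To conclude, we first obtain boundedness of $x_s$. This holds either from the estimate with $t_0=0$ (the output is bounded, being continuous and convergent), or directly from nonexpansivity of $S_\mathrm{cl}$ together with $x^\star\in\zer{A_\mathrm{cl}}$ (Lemma~\ref{lem:zeros}). Given $\varepsilon>0$, choose $t_0$ such that $\sup_{s\ge t_0}\|y_s(s)\|$ is below $\varepsilon$. Letting $t\to\infty$ then gives $\limsup\|x_s(t)\|\le C\varepsilon$, hence $x(t)\to x^\star$. Alternatively, one can cite \cite[Cor. 4.8]{chen2025lyapunov} exactly as in Proposition~\ref{prop:lin_det_VOVS}.

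The main obstacle is justifying the variation-of-parameters representation for the nonlinear closed loop. This requires that the closed-loop trajectory coincides with the mild solution of the linear system under the forcing $f(s)=-Ly_s(s)+Bu_s(s)$. For $x_0\in\dom A$, the trajectory is a strong solution, and $f$ is continuous, so the standard linear mild-solution formula applies. A density argument, using the nonexpansivity \eqref{eq:nonexp} of both semigroups, handles $x_0\in\overline{\dom A}$.
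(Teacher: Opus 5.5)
Your proof is correct and follows essentially the same route as the paper. You shift the closed loop, insert the detectability gain $L$ to write it as $(-A+LB^*)x_s - Ly_s + Bu_s$, bound the saturated term by $\|B\|\,\|y_s\|_U$ via nonexpansivity of $P_F$, and derive the same OSS estimate with constant $c(\|L\|+\|B\|)/\omega$. Three of your choices make explicit what the paper leaves implicit. You treat the saturated input as an exogenous continuous forcing in the linear variation-of-constants formula, rather than appealing to \eqref{eq:var_of_param}, whose monotonicity hypotheses are not immediate for $A-LB^*$. You replace the citation of \cite[Cor. 4.8]{chen2025lyapunov} with a direct time-shifted $\varepsilon$-argument. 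You also add the density step for $x_0\in\overline{\dom{A}}$.
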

\begin{proof}   
     Exponential detectability of $(A,B)$ yields the existence of $L \in L(U,X)$ such that $-A+LB^*$ generates an exponentially stable semigroup $(T_{-A+LB^*}(t))_{t \ge 0 }$. Similar to the proof of Proposition \ref{prop:lin_det_VOVS} and shifting the system, we define $A_{\mathrm{cl},s}(x_s) = A_\mathrm{cl}(x_s+x^\star) - A_\mathrm{cl}(x^\star)$ and the corresponding observer system
   \begin{align}
              \tfrac{\mathrm{d}}{\mathrm{d}t} x_s(t)
              &=  -A_{\mathrm{cl},s}x_s(t) \label{eq:observer2}\\
              &= -A_{\mathrm{cl},s}x_s(t) 
       + L(B^*x_s(t) - B^*x_s(t))\notag \\
       &=  -Ax_s(t) + f_s(x_s(t)) 
       + L(B^*x_s(t) - y_s(t))\notag \\
       &= (-A + LB^*)x_s(t) + f_s(x_s(t)) - Ly_s(t)\notag
   \end{align}
where $f_s(x_s) = f(x_s + x^\star)-f(x^\star)$ with $f(x) = BP_F(u^\star-B^*(x-x^\star)$. Similar to the proof Proposition~\ref{prop:lin_det_VOVS}, we now show that this system is OSS in the sense of \cite[Def. 2.2]{chen2025lyapunov}, see also \cite{sontag1997output}. 
To this end, observe that Lemma \ref{lem:P_F_firmly_nonexp} yields 
    \begin{align*}
        \| P_F(v)-P_F(w) \|_U^2 \leq \langle  P_F(v)-P_F(w), v-w \rangle_U 
        \leq  \| P_F(v)-P_F(w) \|_U \|v-w\|_U
    \end{align*}
    which implies $$\| P_F(v)-P_F(w) \|_U \leq \|v-w\|_U$$ for all $v,w \in U$.
 Thus, $f_s$ is Lipschitz continuous. To see this, let $\tilde x,\tilde z \in X$ and set $x = \tilde{x}+x^\star, z = \tilde{z} + x^\star$, 
    \begin{align*}
        \|f_s(\tilde x) - f_s(\tilde z)\|_X
        &=\|f(x) - f(z)\|_X \\ 
        &=\|B\kappa_{(x^\star,u^\star)}(B^*x) - B\kappa_{(x^\star,u^\star)}(B^*z)\|_X  \\
        & \leq \|B\|_{L(U,X)} \|\kappa_{(x^\star,u^\star)}(B^*x) - \kappa_{(x^\star,u^\star)}(B^*z)\|_U \\
        & = \|B\|_{L(U,X)} \|P_F(u^\star-B^*\tilde{x}) - P_F(u^\star-B^*\tilde{z})\|_U \\
        & \leq  \|B\|_{L(U,X)} \|B^*\tilde{x} - B^*\tilde{z}\|_U \\
        & \leq \|B\|_{L(U,X)}\|B^*\|_{L(X,U)} \|x-z\|_X\\
        &= \|B\|_{L(U,X)}\|B^*\|_{L(X,U)} \|\tilde x -\tilde z\|_X
    \end{align*}
     for all $x,z \in X$. Moreover,
\begin{align*}
    \|f_s(x_s) \|_X & =  \|f(x_s +x^\star) - f(x^\star) \|_X \\
    & \leq \|B\|_{L(U,X)} \|P_F(u^\star-B^*x_s) - P_F(u^\star)\|_U \\
    & \leq \|B\|_{L(U,X)} \|u^\star-B^*x_s - u^\star\|_U \\
    & = \|B\|_{L(U,X)} \|y_s\|_U
\end{align*}
for all $x_s \in X$. Again, for a fixed $x_{s,0}=x_0-x^\star \in \dom{A}$, the solution $x_s(t)=S_{\mathrm{cl},s}(t)x_{s,0}$ of \eqref{eq:closed_loop_system:state} is the solution of \eqref{eq:observer2} and the variation-of-parameters formula \eqref{eq:var_of_param} gives
   \begin{align*}
       S_{\mathrm{cl},s}(t)x_{s,0} = T_{-A+LB^*}(t)x_{s,0} + \int_0^t T_{-A+LB^*}(t-s)(f_s(S_{\mathrm{cl},s}(t)x_{s,0})-
       Ly_s(s)) \, \mathrm{d}s.
   \end{align*}
    Along the lines of the proof of Proposition \ref{prop:lin_det_VOVS}, we combine exponential stabilty of $(T_{-A+LB^*}(t))_{t\geq 0}$ and the bound on $f_s(x_s)$ to get 
    \begin{equation*}
       \| S_{\mathrm{cl},s}(t)x_{s,0} \| \leq  ce^{-\omega t} \|x_{s,0}\| + 
       \tfrac{c (\|B\|+\|L\|)}{\omega} \|y_s\|_{L^\infty(0,t;U)}.
   \end{equation*}
   Thus, \eqref{eq:observer2} is OSS and the same argumentation as in the end of the proof of Proposition~\ref{prop:lin_det_VOVS} yields the claimed VOVS property.
\end{proof}
The previous proposition shows that for linear control systems, exponential detectability of the \emph{open-loop} system implies the VOVS property of the \emph{closed-loop} system. 

For nonlinear system, this implication is, to the best of the authors' knowledge, an open problem. 
However, we show that for nonlinear systems,  detectability as defined in \cite{isidori1985nonlinear} is at least equivalent for the open and closed-loop system, respectively. In \cite{isidori1985nonlinear}, detectability (as usually) is defined for the origin. Here, in view of shifted passivity of our system, we will first define it for a general controlled steady state and then explain the correspondence to its shifted counterpart afterwards.
\begin{defn}\label{def:detectability}
    Let $(M,B)$ be a maximal monotone control system and let $(x^\star, u^\star) \in \dom{M} \times U$ be a controlled equilibrium. Let $S(t)$ be the semigroup generated by $-M$. We define the set of \textit{unobservable states on $[0,t]$ at $x^\star$} for $t\geq 0$ by
\begin{align*}
    \calN_{x^\star}^{(M,B)}(t) \coloneqq 
    \left\{x_0 \in \dom{M}: B^* S(s)x_0 = B^* x^\star  \ \forall \ s \in [0,t] \right\}.
\end{align*}
The \textit{unobservable set at $x^\star$} is then 
\begin{equation*}
    \calN^{(M,B)}_{x^\star} \coloneqq \underset{t \ge 0}{ \cap} \calN^{(M,B)}_{x^\star}(t).
\end{equation*}
The maximal monotone control system $(M,B)$ is \textit{detectable at $x^\star$} if 
\begin{equation*}
    \lim_{t \to \infty} S(t)x_0 = x^\star \qquad \text{for all} \qquad x_0 \in  \calN^{(M,B)}_{x^\star} .
\end{equation*}
\end{defn}
We briefly comment on the detectability notion from Definition \ref{def:detectability} and its relation to other notions from the literature.
\begin{rem}\label{rem:shift}
    Let a maximal monotone control system $(M,B)$ with controlled equilibrium $(x^\star, u^\star) \in \dom{M} \times U$ be given. Define the shifted variables 
    \begin{equation*}
        x_s(t) \coloneqq x(t)- x^\star, \qquad  u_s(t) \coloneqq u(t)- u^\star, \qquad y_s(t) \coloneqq y(t) - y^\star
    \end{equation*}
     and the shifted operator $M_s: X \supset \dom{M_s}\eqqcolon \dom{M}- \{x^\star\} \to X$ by
\begin{equation}\label{eq:M_s}
    M_s(x_s) \coloneqq M(x_s+x^\star) - M(x^\star).
\end{equation}
By definition of $(x_s,u_s,y_s)$, the shifted control system is
    \begin{subequations}
    \label{eq:monotone_phs_shifted}
    \begin{align}
    \tfrac{\mathrm{d}}{\mathrm{d}t} x_s(t) &= -M_s(x_s(t)) + Bu_s(t),\label{eq:monotone_phs_shifted:state}\\
    y_s(t) &= B^* x_s(t) \label{eq:monotone_phs_shifted:output}
\end{align}
\end{subequations}
with initial value $x_s(0)=x_0-x^\star\eqqcolon x_{s,0}$.
It is immediate from the \eqref{eq:M_s} that \eqref{eq:monotone_phs_shifted} again defines a maximal monotone control system with controlled equilibrium $(0,0)$. 

 It is now easy to see that $(M,B)$ is detectable at $x^\star$ in the sense of Definition~\ref{def:detectability} if $(M_s, B)$ is detectable (at $0$) in the sense of \cite[Def. 10.7.3]{isidori1985nonlinear}. To this end, let the semigroup generated by $-M$ be denoted by $(S(t))_{t \ge 0}$. Then 
\begin{equation*}
    S_s(t)x_{s,0}= S(t)x_0 - x^\star =  S(t)(x_{s,0}+x^\star) - x^\star
\end{equation*}
defines the semigroup generated by $-M_s$. Thus,
\begin{equation}\label{eq:shifted_det_subsets}
     \calN^{(M_s,B)}_{0}  =  \calN^{(M,B)}_{x^\star} - \{x^\star\}.
\end{equation}
\end{rem}
The following result shows that detectability holds for the open-loop system if and only if it holds for the closed-loop system resulting from application of our saturated output feedback.
\begin{prop}
     Let $F \subset U$ be closed, convex and let a controlled steady state $(x^\star,u^\star) \in D(M)\times \operatorname{int} F$ be given. The maximal monotone control system $(M,B)$ is detectable at $x^\star$ if and only if $(M_\mathrm{cl},B)$ is detectable at $x^\star$, where $M_\mathrm{cl}$ is defined as in \eqref{eq:M_cl}. 
\end{prop}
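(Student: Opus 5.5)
The plan is to show that the two unobservable sets coincide and that the open-loop and closed-loop semigroups agree on them:
\[
\calN^{(M,B)}_{x^\star}=\calN^{(M_\mathrm{cl},B)}_{x^\star}, \qquad S(t)x_0=S_\mathrm{cl}(t)x_0 \ \text{ for all } t\ge 0 \text{ and } x_0 \text{ in this common set}.
\]
Both definitions of detectability only involve trajectories starting in the unobservable set, so the equivalence follows immediately once this is established. The key observation is that along any trajectory whose output equals $y^\star=B^*x^\star$, the feedback is constant:
\[
P_F(u^\star-B^*(x-x^\star))=P_F(u^\star)=u^\star,
\]
where we use $u^\star\in\operatorname{int}F\subset F$. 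Consequently, on such a trajectory
\[
M_\mathrm{cl}(x)=M(x)-Bu^\star .
\]

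First I need a semigroup comparison. The natural object is the open-loop system with constant input $u^\star$, governed by $M(\cdot)-Bu^\star$. This operator is a bounded constant shift of $M$, hence maximal monotone. Let $S^{u^\star}(t)$ denote the semigroup it generates. Note that Definition~\ref{def:detectability} applied to $(M,B)$ uses the semigroup $S$ generated by $-M$, that is, zero input. Since $x^\star$ is an equilibrium of $\dot x=-M(x)+Bu^\star$ and not in general of $\dot x=-M(x)$, I read the definition as intended for the system driven by the equilibrium input $u^\star$, so that $S(t)x^\star=x^\star$. Equivalently, I work with the shifted system $(M_s,B)$ of Remark~\ref{rem:shift}, where the equilibrium input is $0$. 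I will fix this interpretation explicitly at the start of the proof.

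For the inclusion $\calN^{(M,B)}_{x^\star}\subset\calN^{(M_\mathrm{cl},B)}_{x^\star}$, take $x_0$ in the left-hand set and set $x(t)=S^{u^\star}(t)x_0$. This is a strong solution, since $x_0\in\dom M$, and its output satisfies $B^*x(t)=B^*x^\star$ for all $t$. By the key observation, $x$ also solves $\dot x=-M_\mathrm{cl}(x)$. Strong solutions of a maximal monotone evolution are unique, so $x(t)=S_\mathrm{cl}(t)x_0$. Hence $x_0$ lies in the closed-loop unobservable set, and the two trajectories coincide. The reverse inclusion is symmetric: a closed-loop trajectory with output $y^\star$ satisfies $M_\mathrm{cl}(x)=M(x)-Bu^\star$ along the whole trajectory, so it is also the open-loop trajectory.

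The main technical point is the uniqueness argument. The trajectory must satisfy the differential equation almost everywhere, and the two operators agree only on the set $\{x: B^*x=B^*x^\star\}$. Since the trajectory stays in this set for all $t\ge0$ by assumption, the equations coincide along it, and Lipschitz uniqueness for monotone evolutions applies. One can verify this with the energy estimate
\[
\tfrac{\mathrm d}{\mathrm dt}\|x-z\|^2\le 0 .
\]
With the coincidence of sets and trajectories, the limits $S(t)x_0\to x^\star$ and $S_\mathrm{cl}(t)x_0\to x^\star$ are equivalent, which proves the proposition.
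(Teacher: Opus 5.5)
Your proposal is correct and is essentially the paper's argument. The paper also passes to the shifted operators $M_s$ and $M_{\mathrm{cl},s}$, uses that $P_F(u^\star-B^*x_s)=P_F(u^\star)=u^\star$ along trajectories whose output equals $y^\star$, and concludes by uniqueness of strong solutions. You are more explicit than the paper in two places: you prove both inclusions between $\calN^{(M,B)}_{x^\star}$ and $\calN^{(M_\mathrm{cl},B)}_{x^\star}$, and you correctly note that the open-loop semigroup must be the one driven by $u^\star$ (equivalently, generated by $-M_s$) rather than by $-M$ itself, which the paper handles only implicitly through the shift.
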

\begin{proof}
    We set
    \begin{align*}
        M_s(x_s)&= M(x_s+x^\star) - M(x^\star)
    \end{align*}
    and 
    \begin{align*}
        M_{\mathrm{cl},s}(x_s)&= M_\mathrm{cl}(x_s+x^\star) - M_\mathrm{cl}(x^\star) \\
        &=  M(x_s+x^\star) - M(x^\star) - (BP_F(u^\star-B^*x_s)-Bu^\star) \\
        &= M_s(x_s)- (BP_F(u^\star-B^*x_s)-Bu^\star).
    \end{align*}
    Let $(S_s(t))_{t \ge 0}, (S_{\mathrm{cl},s}(t))_{t \ge 0}$ be the nonlinear semigroups generated by $-M_s$ and $-M_{\mathrm{cl},s}$, respectively. Then,
    \begin{equation*}
        \tfrac{\mathrm{d}}{\mathrm{d}t} S_s(t)x_{s,0} = -M_{s}(S_s(t)x_{s,0})
    \end{equation*}
    with initial value $x_{s,0} \in \calN_{0}^{(M_s,B)}$.
    Furthermore, for initial values $x_{s,0} \in \calN_{0}^{(M_{\mathrm{cl},s},B)}$ (which, by definition, satisfy $B^*S_{\mathrm{cl},s}(t)x_{s,0}=0$ for all $t\geq 0$),
    \begin{align*}
        \tfrac{\mathrm{d}}{\mathrm{d}t} S_{\mathrm{cl},s}(t)x_{s,0}
        &= -M_{\mathrm{cl},s}(S_{\mathrm{cl},s}(t)x_{s,0}) \\
        &= -M_{s}(S_{\mathrm{cl},s}(t)x_{s,0}) -B(P_F(u^\star- B^*S_{\mathrm{cl},s}(t)x_{s,0})-u^\star) \\
        &=  -M_{s}(S_{\mathrm{cl},s}(t)x_{s,0})- B(P_F(u^\star)-u^\star)\\
        &= -M_{s}(S_{\mathrm{cl},s}(t)x_{s,0})
    \end{align*}
    since  $u^\star \in \operatorname{int} F$.
    By uniqueness of strong solutions, we have 
    \begin{equation*}
       \lim_{t \to \infty} S_s(t)x_{s,0} =0 \qquad \text{for all} \qquad x_{s,0} \in \calN_{0}^{(M_s,B)},
    \end{equation*}
    if and only if, 
    \begin{equation*}
          \lim_{t \to \infty} S_{\mathrm{cl},s}(t)x_{s,0} =0 \qquad \text{for all} \qquad x_{s,0} \in \calN_{0}^{(M_{\mathrm{cl},s},B)},
    \end{equation*}
    which proves the claim.
\end{proof}

\section{Numerical examples}\label{sec:num}
\noindent In this part, we provide three examples that highlight the applicabiltiy to our result to a wide range of problems. In particular, we consider a finite-dimensional nonlinear system in Subsection~\ref{subsec:FD}, a heat equation in \ref{subsec:heat} and a wave equation in Subsection~\ref{subsec:wave}.
\subsection{A finite-dimensional example}\label{subsec:FD}
\noindent We consider the nonlinear problem 
\begin{subequations}
    \label{eq:ex_monotone_phs}
\begin{align}
    \dot x(t) &=  -\nabla \Psi (x(t))+ \begin{smallbmatrix}
        0 & -1 \\ 1 & 0
    \end{smallbmatrix}  (x(t)-x^\star) + \begin{smallbmatrix}
        1 \\0
    \end{smallbmatrix} u(t) \label{eq:ex_monotone_phs:state} \\
    y(t) &= \begin{smallbmatrix}
        1 & 0
    \end{smallbmatrix} x(t). \label{eq:ex_monotone_phs:output}
\end{align}
\end{subequations}
where $x^\star \in \R^2$,  $\Psi: \R^2 \to \R$ with $\Psi(x)=\varepsilon g(\|x-x^\star \|^2)$ with $\varepsilon>0$ and
$g(r)= \sqrt{1+r}$ such that 
\begin{equation*}
  \nabla \Psi (x) = \tfrac{\varepsilon}{\sqrt{1+ \|x-x^\star \|^2}}(x-x^\star).
\end{equation*}
Note that the function $\Psi$ is convex as
\[
\Psi(x)
=
\varepsilon\sqrt{1+\|x-x^\star\|^2}
=
\varepsilon\big\|(1,\, x-x^\star)\big\|_{\mathbb{R}^3},
\]
such that $\Psi$ is the composition of a convex function
with an affine mapping and hence convex. 

Define the operator $M: \R^2 \to \R^2$ corresponding to the autonomous part of \eqref{eq:ex_monotone_phs} as
\begin{equation*}
    M(x)= \nabla \Psi (x)+ \begin{smallbmatrix}
        0 & 1 \\ -1 & 0
    \end{smallbmatrix}  (x-x^\star).
\end{equation*}
\begin{prop}\label{prop:fin_dim}
    The operator $M:\R^2\to \R^2$ is maximal monotone and has its unique zero at $x^\star$.
\end{prop}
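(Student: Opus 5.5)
We split $M$ as $M = \nabla\Psi + J(\cdot - x^\star)$ with the skew-symmetric matrix $J = \begin{smallbmatrix} 0 & 1 \\ -1 & 0 \end{smallbmatrix}$, and treat the two summands separately.

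\textbf{Monotonicity.} The gradient of a differentiable convex function is monotone; this follows by adding the two first-order convexity inequalities $\Psi(z)\ge \Psi(x)+\langle\nabla\Psi(x),z-x\rangle$ and the one with $x,z$ exchanged. Convexity of $\Psi$ was shown right before the statement. For the affine part, $\langle J(x-z),x-z\rangle = 0$ because $J^\top=-J$. Adding the two gives $\langle M(x)-M(z),x-z\rangle\ge 0$ for all $x,z\in\R^2$.

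\textbf{Maximality.} $M$ is everywhere defined on $\R^2$ and continuous, since $\nabla\Psi$ is smooth. A continuous monotone operator with full domain is maximal monotone; this is a standard result, e.g.\ \cite[Cor. 20.28]{BausComb2011}. Alternatively, one can argue as in Lemma~\ref{lem:mcl_maxmon}: $J(\cdot-x^\star)$ is a bounded linear monotone operator with full domain, hence maximal, and $\nabla\Psi$ is continuous, monotone and everywhere defined, so \cite[Cor. 25.5]{BausComb2011} gives maximality of the sum.

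\textbf{Unique zero.} Clearly $M(x^\star)=0$, since both terms vanish there. Conversely, suppose $M(x)=0$ and set $w=x-x^\star$. Taking the inner product of $M(x)$ with $w$ kills the skew part and leaves
\[
0=\langle M(x),w\rangle=\tfrac{\varepsilon}{\sqrt{1+\|w\|^2}}\|w\|^2 .
\]
Since $\varepsilon>0$, this forces $w=0$, i.e.\ $x=x^\star$.

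None of these steps is a real obstacle. The only point requiring care is citing the right maximality criterion for continuous monotone maps in finite dimensions.
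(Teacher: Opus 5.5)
Your proof is correct and follows the paper's argument: the skew part $J$ drops out of the monotonicity inner product, monotonicity of $\nabla\Psi$ comes from convexity, and the unique zero follows from testing $M(x)=0$ against $x-x^\star$. Your maximality step is slightly more careful than the paper's. The paper cites only that $M$ is everywhere defined, but it is continuity, which you supply, that makes a monotone map with full domain maximal.
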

\begin{proof}
    Let $J\coloneqq\begin{smallbmatrix}
        0 & 1 \\ -1 & 0
    \end{smallbmatrix}  $. Then, for $x,y \in \R^n$:
    \begin{align*}
        \langle M(x)-M(y),x-y \rangle_{\R^2} &= \langle  \nabla \Psi (x)- \Psi(y),x-y \rangle_{\R^2}  +  \langle J(x-y),x-y \rangle_{\R^2} \\
       &= \langle \nabla \Psi (x)- \nabla \Psi (y),x-y \rangle_{\R^2}  \geq 0,
    \end{align*}
    where the last inequality follows from the fact that gradients of proper, convex and lower semi-continuous functions are (maximal) monotone, \cite[Thm. 20.25]{BausComb2011}. Hence, $M$ is monotone and the maximality follows from $M$ being everywhere defined. To see that $\zer M = \{x^*\}$, let $x \in \zer M$, then 
    \begin{equation*}
        0=\langle M(x),x-x^\star \rangle_{\R^2} = \varepsilon \|x-x^\star \|_{\R^2}^2\tfrac{1}{\sqrt{1+ \|x-x^\star \|_{\R^2}^2}}
    \end{equation*}
    which proves that $x=x^\star$.  
\end{proof}
It is a direct consequence from Proposition~\ref{prop:fin_dim} that $(x^\star,0)$ defines a controlled equilibrium of \eqref{eq:ex_monotone_phs}. We restrict the admissible controls to take values pointwise in the closed interval
\begin{align*}
    F=[a,b], \qquad a < 0 < b.
\end{align*}
Clearly, $F$ is convex, closed and $u^\star = 0 \in \operatorname{int}F$. The corresponding output feedback law \eqref{eq:satfeed} is 
\begin{align*}
    u(t)=P_F(-\begin{smallbmatrix}
        1 & 0
    \end{smallbmatrix}(x(t)-x^\star)=P_{[a,b]}(-(x_1(t)-x_1^\star)).
\end{align*}
Thus, the closed-loop system is given by
\begin{subequations}\label{eq:ex_closed_loop}
    \begin{align}
    \tfrac{\mathrm{d}}{\mathrm{d}t}x(t)& = -M(x(t)) + \begin{smallbmatrix}
        P_{[a,b]}(-(x_1(t)-x_1^\star))\\  0
    \end{smallbmatrix}\label{eq:ex_closed_loop:state}\\
    y(t) &= x_1(t) \label{eq:ex_closed_loop:output}
\end{align}
\end{subequations}
The following result shows that the example satisfies the assumptions of the asymptotic stability result Theorem~\ref{thm:main}.
\begin{prop}\label{prop:ex_prop_VOVS}
    The system \eqref{eq:ex_closed_loop} fulfills the VOVS property at $x^\star$.
\end{prop}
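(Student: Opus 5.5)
The plan is to exploit the fact that this example is dissipative enough that the VOVS implication holds trivially: I expect every closed-loop trajectory of \eqref{eq:ex_closed_loop} to converge to $x^\star$ regardless of the output. The output hypothesis $B^*(S_\mathrm{cl}(t)x_0-x^\star)\to 0$ will then not be needed at all. Since $\dom M=\R^2$ and $M_\mathrm{cl}$ is a locally Lipschitz vector field, the semigroup $S_\mathrm{cl}(t)x_0$ is the classical $C^1$ solution of the ODE for every $x_0\in\R^2$. This allows a direct Lyapunov computation.

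\textbf{Step 1 (shifted dynamics).} I set $z(t)\coloneqq x(t)-x^\star$ and $\varphi(z)\coloneqq \varepsilon/\sqrt{1+\|z\|^2}$. Writing out \eqref{eq:ex_closed_loop:state} with $J=\begin{smallbmatrix}0&1\\-1&0\end{smallbmatrix}$ gives
\begin{equation*}
\dot z_1=-\varphi(z)z_1-z_2+P_{[a,b]}(-z_1),\qquad \dot z_2=-\varphi(z)z_2+z_1 .
\end{equation*}

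\textbf{Step 2 (energy estimate).} Using the skew-symmetry of $J$, I compute
\begin{equation*}
\tfrac{\mathrm d}{\mathrm dt}\tfrac12\|z\|^2=-\varphi(z)\|z\|^2+z_1P_{[a,b]}(-z_1).
\end{equation*}
Since $a<0<b$, the projection $P_{[a,b]}(-z_1)$ has the same sign as $-z_1$ or vanishes, so $z_1P_{[a,b]}(-z_1)\le 0$. This can also be obtained from Lemma~\ref{lem:P_F_firmly_nonexp} with $P_F(0)=0$. Hence $\tfrac{\mathrm d}{\mathrm dt}\|z\|^2\le -2\varphi(z)\|z\|^2\le 0$. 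In particular $\|z(t)\|\le \|z(0)\|\eqqcolon R$ for all $t\ge0$.

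\textbf{Step 3 (uniform decay rate).} On the ball of radius $R$, the function $\varphi$ is bounded below by $\varphi_0\coloneqq \varepsilon/\sqrt{1+R^2}>0$. Therefore $\tfrac{\mathrm d}{\mathrm dt}\|z\|^2\le -2\varphi_0\|z\|^2$, and Gr\"onwall's inequality yields $\|x(t)-x^\star\|\le e^{-\varphi_0 t}\|x_0-x^\star\|\to0$. This gives $\lim_{t\to\infty}S_\mathrm{cl}(t)x_0=x^\star$ for every $x_0$, so the VOVS implication holds a fortiori.

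The only delicate point is Step 3: $\nabla\Psi$ is not strongly monotone globally, since $\varphi\to0$ as $\|z\|\to\infty$. The remedy is the a priori bound from Step 2, which confines each trajectory to a ball on which the damping is uniformly positive. The sign property of the saturated feedback guarantees that the control never injects energy, so this bound is not destroyed by the input.
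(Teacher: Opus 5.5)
Your proof is correct and follows essentially the paper's route. You use the same Lyapunov function $V(x)=\tfrac12\|x-x^\star\|^2$ and the same sign property $z_1P_{[a,b]}(-z_1)\le 0$, and conclude that every closed-loop trajectory converges to $x^\star$, so the VOVS implication holds trivially. The only difference is the last step: the paper cites a Lyapunov/LaSalle-type theorem (Sontag, Lem.~5.7.8), whereas you use an elementary Gr\"onwall argument on the forward-invariant ball. This also gives the explicit rate $\|x(t)-x^\star\|\le e^{-\varphi_0 t}\|x_0-x^\star\|$ with $\varphi_0=\varepsilon/\sqrt{1+\|x_0-x^\star\|^2}$.
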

\begin{proof}
    We observe that $V(x)\coloneqq \tfrac{1}{2} \| x-x^\star\|_{\R^2}^2$ defines a Lyapunov function for \eqref{eq:ex_closed_loop}. Precisely, let $z = x-x^\star$ and consider
    \begin{align*}
        \dot V(x) &= \langle \nabla V(x), -M(x) + \begin{smallbmatrix}
        P_{[a,b]}(-(x_1-x_1^\star)) \\  0
    \end{smallbmatrix}\rangle_{\R^2}  \\
    &= -\varepsilon \tfrac{\|z\|_{\R^2}^2}{\sqrt{1+\|z\|_{\R^2}^2}} 
    + z_1 P_{[a,b]} (-z_1) \\
    & \leq 0.
    \end{align*}
    Moreover, the computations in the proof of Proposition~\ref{prop:fin_dim} yield that 
    \begin{equation*}
        S \coloneqq \left\{x \in \R^2 : \dot V(x)=0 \right\}= \left\{x^\star \right\}.
    \end{equation*}
    By \cite[Lem. 5.7.8]{sontag2013mathematical}, \eqref{eq:ex_closed_loop} is globally asymptotically stable and hence, fulfills the VOVS property at $x^\star$. 
\end{proof}
A combination of Proposition \ref{prop:ex_prop_VOVS} and Theorem \ref{thm:main} proves that the suggested controller \eqref{eq:satfeed} asymptotically stabilizes the system \eqref{eq:ex_monotone_phs:state}. The numerical results are illustrated in Figure~\ref{fig:saturated_cons} for the choices $x^\star=(-1.0,1.5)$, $a=-0.7, b=0.9$, $x_0=(2.0,-3.0)$ and $\varepsilon = 0.01$.
\begin{figure}[htb]
  \centering
  \includegraphics[width=0.8\columnwidth]{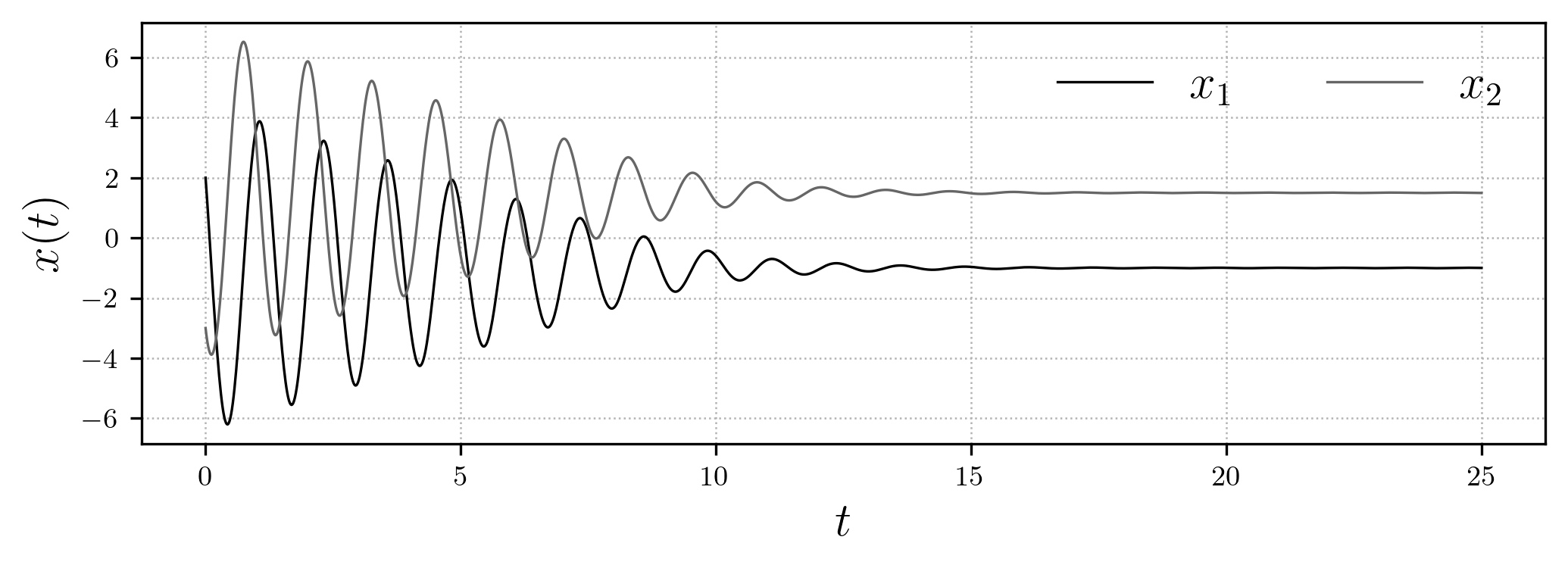}
  \includegraphics[width=0.8\columnwidth]{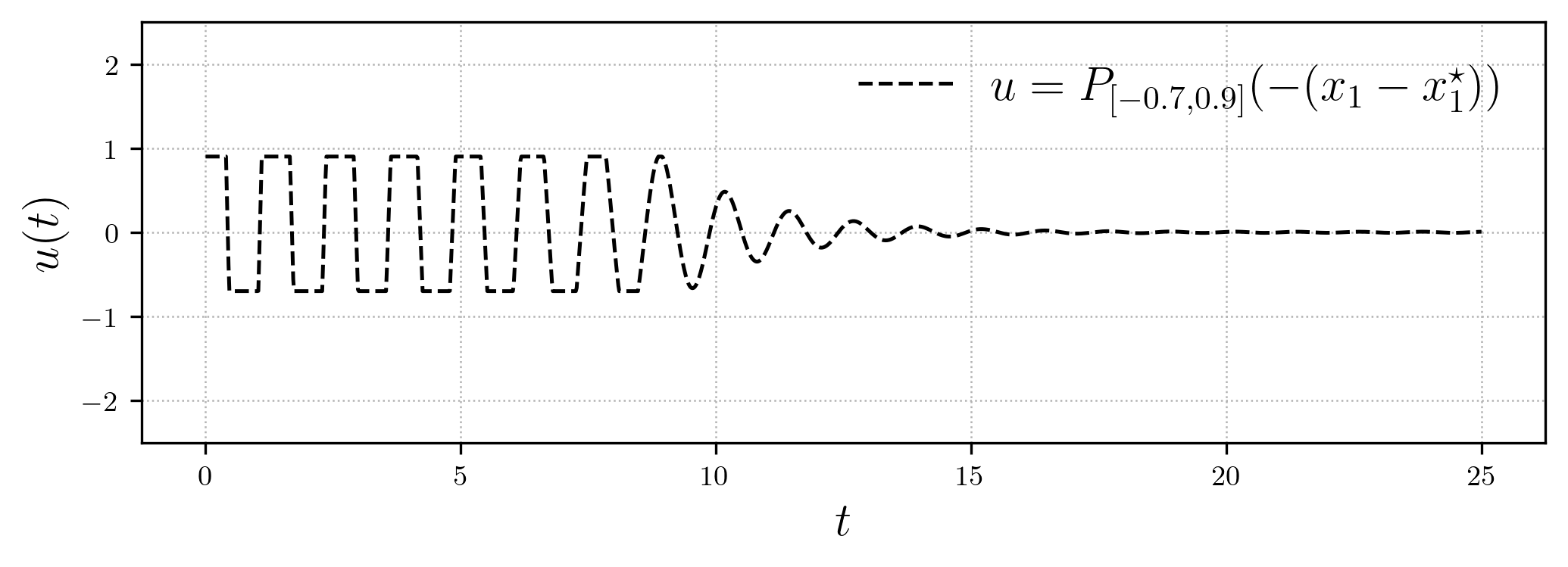}
  \caption{Top: Closed-loop state trajectories $x_1(t)$, $x_2(t)$ of \eqref{eq:ex_closed_loop:state} with initial value $x_0=(2.0,-3.0)$. Bottom: Saturated control input $u(t)=P_{[a,b]}(-(x_1(t)-x_1^\star))$.}
  \label{fig:saturated_cons}
\end{figure}
\subsection{Two-dimensional heat equation}\label{subsec:heat}
\noindent We define the bounded Lipschitz domain $\Omega\coloneqq (0,1) \times (0,1)$. 
The \textit{Neumann trace} $\partial_\nu x$ of $x \in H^2(\Omega)$ is defined by the unique element $w \in H^{-1/2}(\partial \Omega)$
that fulfills the integration-by-parts formula 
\begin{align}\label{eq:ibp_Neumann} 
    \langle \nabla x, \nabla y \rangle_{L^2(\Omega;\R^2)}& =   \langle - \Delta x,  y \rangle_{L^2(\Omega)} +   \langle w, \left. y \right|_{\partial \Omega} \rangle_{H^{-1/2}(\partial \Omega),H^{1/2}(\partial \Omega)} 
\end{align}
for all $y \in H^{1}( \Omega)$. The \textit{Neumann Laplacian} is then given by the operator $A_N : L^2(\Omega) \subset \dom{A_N} \to L^2(\Omega)$ with 
\begin{equation*}
    \dom{A_N} = \{x \in H^2(\Omega): \partial_\nu x =0 \}, \quad A_N x = -\Delta x .
\end{equation*}
It follows directly from \eqref{eq:ibp_Neumann} that the Neumann Laplacian is monotone, i.e., 
\begin{equation}\label{eq:neumann_pos}
    \langle A_N x,  x\rangle_{L^2(\Omega)} =  \| \nabla x \|_{L^2(\Omega;\R^2)}^2 \ge 0 \quad \forall x \in \dom{A_N},
\end{equation}
and it is clear that $A_N$ is self-adjoint and hence maximal with respect to this property. Let $\Omega_c 
\subset \Omega$ denote the subregion on which a distributed heating/cooling source is applied. We assume that $\Omega_c$ has positive Lebesgue measure $|\Omega_c|> 0$. 
We define the bounded linear operator $B: L^2(\Omega_c) \to L^2(\Omega)$ by extension by zero, i.e.,
\begin{equation*}
    (Bu)(\omega) \coloneqq (\mathds{1}_{\Omega_c}u) (\omega) \coloneqq\begin{cases}
        u(\omega) , & \text{if} \ \omega \in \Omega_c; \\
        0, & \text{otherwise}.
    \end{cases}
\end{equation*}
Its adjoint $B^*: L^2(\Omega) \to L^2(\Omega_c)$ is given by the restriction $B^* x = \left. x \right|_{\Omega_c}$. Consequently, 
\begin{subequations}\label{eq:Neumann_sys}
    \begin{align}
    \tfrac{\mathrm{d}}{\mathrm{d}t} x(t) &= -A_N(x(t)) + \mathds{1}_{\Omega_c}u(t), \quad x(0)=x_0,\label{eq:Neumann_sys:state}\\
    y(t) &= \left. x(t) \right|_{\Omega_c} \label{eq:Neumann_sys:output}
\end{align}
\end{subequations}
defines a maximal monotone control system \eqref{eq:monotone_phs}. 
Assuming that the distributed heating/cooling source is constrained to $[T_{\mathrm{min}}, T_{\mathrm{max}}] \ni 0$ we define the control constraint set
\begin{equation*}
    F= \left\{ u \in L^2(\Omega_c) : T_{\mathrm{min}} \leq u(\omega) \leq T_{\mathrm{max}} \ \text{a.e. in} \ \Omega_c \right\}.
\end{equation*}
We prescribe a steady distributed input $u^\star$ supported in $\Omega_c$ by
\begin{equation*}
    u^\star(x,y)
=
\begin{cases}
4\,\sin\!\big(2\pi\xi(x)\big)\sin\!\big(2\pi\eta(y)\big),
& (x,y)\in\Omega_c,\\[4pt]
0, & (x,y)\notin\Omega_c,
\end{cases}    
\end{equation*}
where $\xi(x)=\frac{x-0.2}{0.6}$ and $\eta(y)=\frac{y-0.2}{0.6}$.
The amplitude is chosen such that $T_{\min}\le u^\star\le T_{\max}$ 
and $\int_\Omega u^\star=0$, ensuring compatibility with the Neumann Laplacian.

The corresponding steady temperature profile $x^\star$ is defined as the
solution of the elliptic problem
\begin{equation}\label{eq:poisson}
    \Delta x^\star = - B u^\star
\text{in }\Omega,
\quad
\partial_\nu x^\star = 0 \text{ on }\partial\Omega,
\quad
\int_\Omega x^\star = 0.
\end{equation}
Since $Bu^\star$ vanishes outside $\Omega_c$, it follows that
\begin{equation*}
    \Delta x^\star = 0 
\quad\text{in }\Omega\setminus\Omega_c,
\end{equation*}
so $x^\star$ is harmonic outside the control region and nonconstant inside.
The pair $(x^\star,u^\star)$ is illustrated in Figure \ref{fig:heat_targets_snapshots}.
\begin{figure}[htb]
\centering
\begin{minipage}[t]{0.49\columnwidth}
  \centering
  \includegraphics[width=0.8\linewidth]{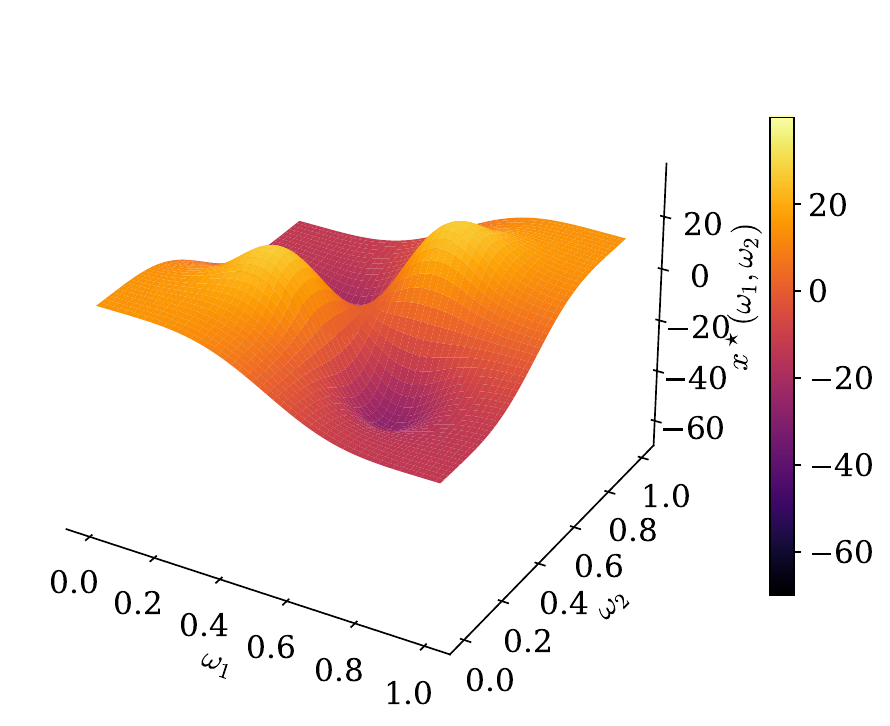}
  \par\small (a) $x^\star$
\end{minipage}\hfill%
\begin{minipage}[t]{0.49\columnwidth}
  \centering
  \includegraphics[width=0.8\linewidth]{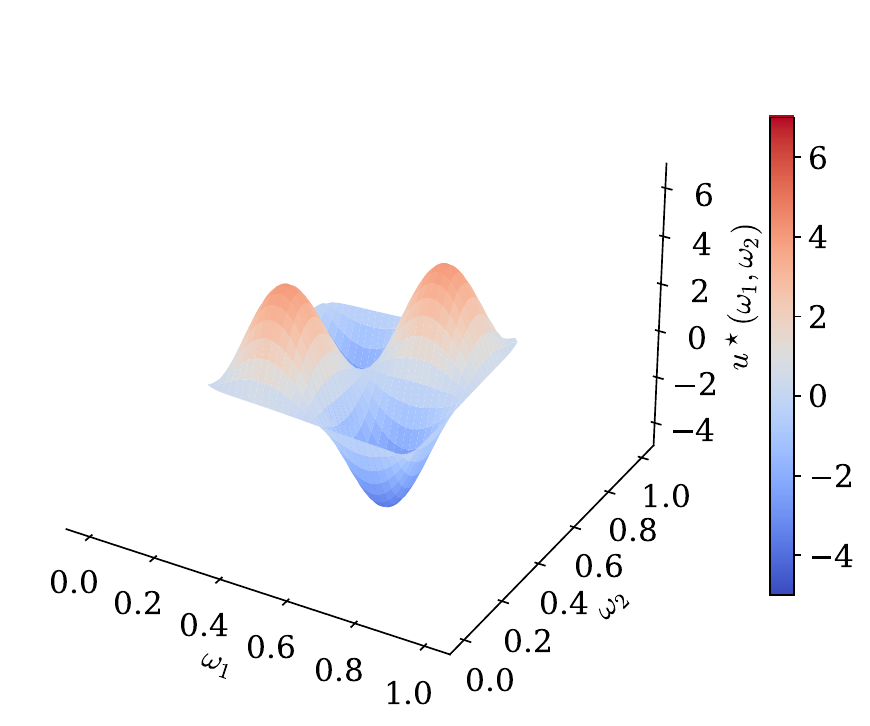}
  \par\small (b) $u^\star$
\end{minipage}
\caption{The controlled equilibrium $(x^\star,u^\star)$ with $\Omega_c=(0.2, 0.8)^2$.}
\label{fig:heat_targets_snapshots}
\end{figure}

The saturated output feedback \eqref{eq:satfeed} is given by
\begin{equation}\label{eq:heat_control}
    u = \kappa_{x^\star}(y) = P_F(u^\star- \left.(x-x^\star)\right|_{\Omega_c})
\end{equation}
leading to the corresponding closed-loop system
\begin{subequations}\label{eq:cl_Neumann_sys}
    \begin{align}
    \tfrac{\mathrm{d}}{\mathrm{d}t} x(t) &= -A_{N,\mathrm{cl}}(x(t)),  \label{eq:cl_Neumann_sys:state}\\
    y(t) &= \left. x(t) \right|_{\Omega_c} \label{eq:cl_Neumann_sys:output}
\end{align}
\end{subequations}
where $A_{N,\mathrm{cl}}: L^2(\Omega) \supset \dom{A_N} \to L^2(\Omega)$ enjoys
\begin{equation*}
    A_{N,\mathrm{cl}}(x) = A_N x - \mathds{1}_{\Omega_c}P_F(u^\star- \left.(x-x^\star)\right|_{\Omega_c}).
\end{equation*}
The following result shows that this system satisfies the assumptions of our stability result Theorem~\ref{thm:main}.
\begin{prop}
    The system \eqref{eq:cl_Neumann_sys} fulfills the VOVS property at $x^\star$.
\end{prop}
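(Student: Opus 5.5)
The plan is to reduce the claim to Proposition~\ref{prop:exp_det}: the heat system $(A_N,B)$ is linear and maximal monotone, so it suffices to show that it is exponentially detectable. Then the nonlinear closed-loop system \eqref{eq:cl_Neumann_sys} has the VOVS property at $x^\star$ directly from Proposition~\ref{prop:exp_det}. We also need $(x^\star,u^\star)$ to be a controlled equilibrium with $x^\star\in\dom{A_N}$. This holds by construction through \eqref{eq:poisson}, since the compatibility condition $\int_\Omega Bu^\star=0$ gives solvability. Elliptic $H^2$-regularity on the convex square then gives $x^\star\in H^2(\Omega)$.

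For detectability we choose $L=B$, i.e.\ the operator $-A_N-BB^*=\Delta-\mathds{1}_{\Omega_c}$ with Neumann boundary conditions. The operator $A_N+BB^*$ is self-adjoint, nonnegative, and has compact resolvent, because $H^1(\Omega)\hookrightarrow L^2(\Omega)$ compactly. Exponential stability of the generated semigroup is therefore equivalent to strict positivity of its smallest eigenvalue
\begin{equation*}
\lambda_1=\inf_{\|x\|_{L^2(\Omega)}=1,\ x\in H^1(\Omega)} \|\nabla x\|_{L^2(\Omega;\R^2)}^2+\|x\|_{L^2(\Omega_c)}^2 .
\end{equation*}
In that case $\|T(t)\|\le e^{-\lambda_1 t}$, and we get the required bound with $c=1$ and $\omega=\lambda_1$.

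The main step is showing $\lambda_1>0$. We argue by contradiction with a standard compactness argument. The infimum is attained, either via the compact resolvent or via a minimizing sequence bounded in $H^1$ together with Rellich. Suppose $\lambda_1=0$ for a minimizer $x$ with $\|x\|=1$. Then $\nabla x=0$, so $x$ is constant on the connected domain $\Omega$. Also $x|_{\Omega_c}=0$, and since $|\Omega_c|>0$ this constant is zero, contradicting $\|x\|=1$.

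Equivalently, one could prove a Poincaré-type inequality $\|x\|^2\le C(\|\nabla x\|^2+\|x\|_{L^2(\Omega_c)}^2)$, obtained by splitting $x$ into its mean plus a mean-zero part. The only delicate point is making sure that the resulting operator is exactly the generator $-A_N+LB^*$ in the sense of \cite{CurtainZwart2020}, with $L=B\in L(L^2(\Omega_c),L^2(\Omega))$. This is immediate, since $BB^*$ is bounded and self-adjoint.
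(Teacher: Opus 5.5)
Your proposal is correct, and its outer structure matches the paper's proof. Both reduce the claim to Proposition~\ref{prop:exp_det} and then show that $-(A_N+BB^*)$ generates an exponentially stable semigroup. One small slip: under the convention $-A+LB^*$ used in the paper, this operator corresponds to $L=-B$, not $L=B$. This is harmless.

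Where you differ is in how you establish strict positivity of $A_N+BB^*$.
\begin{itemize}
\item The paper proves an explicit coercivity estimate $\langle (A_N+BB^*)x,x\rangle\ge\alpha\|x\|^2$. It splits $x$ into its mean $\overline{x}$ and a mean-zero part, and controls the mean-zero part by the Poincar\'e--Wirtinger inequality with constant $C$. It controls the mean through $\|x|_{\Omega_c}\|^2\ge\tfrac12|\overline{x}|^2|\Omega_c|-2\|x-\overline{x}\|^2$. This gives $\alpha=\min\bigl(\tfrac{|\Omega_c|}{4(1+2/C)},\tfrac{C}{2}\bigr)$, and $\|T(t)\|\le e^{-\alpha t}$ follows from dissipativity of $-(A_N+BB^*)+\alpha I$, with no spectral theory needed.
\item You use self-adjointness and the compact resolvent to identify the decay rate with the bottom $\lambda_1$ of the spectrum. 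You then show $\lambda_1>0$ by a compactness--contradiction argument: a zero-energy minimizer is constant and vanishes on $\Omega_c$, hence is zero.
\end{itemize}

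Your route is shorter and isolates exactly what matters. It needs only that the form $\|\nabla x\|^2+\|x\|_{L^2(\Omega_c)}^2$ has trivial kernel and that $H^1(\Omega)\hookrightarrow L^2(\Omega)$ is compact. It therefore carries over verbatim to other connected Lipschitz domains or coefficient fields. However, it yields no explicit rate and relies on self-adjointness.

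The paper's estimate gives a computable decay rate $\alpha$. It also works for non-self-adjoint generators, since only the quadratic form enters.

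The two routes are closer than they appear, because Poincar\'e--Wirtinger is itself usually proved by the same Rellich-based contradiction argument. Your extra check that $x^\star\in\dom{A_N}$, via $H^2$-regularity of the Neumann problem on the convex square, fills in a point the paper leaves implicit.
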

\begin{proof}
    By Proposition \ref{prop:exp_det}, it is enough to prove that $(A_N,B)$ is exponentially detectable. To this end, we show that $-(A_N+BB^*)$ is the generator of an exponentially stable semigroup on $L^2(\Omega)$. To see this, we prove coercivity of $A_N+BB^*$, i.e., we show that there exists a constant $\alpha > 0$ such that
    \begin{equation*}
        \langle (A_N+BB^*)x,x \rangle_{L^2(\Omega)} \geq \alpha \| x\|_{L^2(\Omega)}^2 
    \end{equation*}
    for all $x \in \dom{A_N}$. By \textit{Poincaré's inequality}, \cite[Ch. 5.8.1, Thm. 1]{evans2022partial}, there exists a constant $C> 0$ such that
    \begin{equation}\label{eq:Poincare}
         \langle A_Nx,x \rangle_{L^2(\Omega)} = \|\nabla x\|_{L^2(\Omega;\R^2)}^2  \geq C \| x-\overline{x}\|_{L^2(\Omega)}^2 
    \end{equation}
    for all $x \in \dom{A_N}$, where $\overline{x} \coloneqq \int_\Omega x(\omega) \, \mathrm{d}\omega$ denotes the \textit{average of $x$ over $\Omega$}. It follows from straightforward computations that for $a,b \in \R$:
    \begin{equation}\label{eq:Young}
        |a+b|^2 \geq \tfrac{1}{2}|a|^2 - 2|b|^2.
    \end{equation}
    Thus,
    \begin{align*}
         \| B^*x\|_{L^2(\Omega)}^2 
        &\overset{\hphantom{\eqref{eq:Young}}}{=} \int_{\Omega_c} |x(\omega)|^2 \, \mathrm{d}\omega \\
        &\overset{\hphantom{\eqref{eq:Young}}}{=} \int_{\Omega_c} |\overline{x}+(x(\omega)-\overline{x})|^2 \, \mathrm{d}\omega \\
        &\overset{\eqref{eq:Young}}{\geq} \int_{\Omega_c} \tfrac{1}{2}|\overline{x}|^2\, \mathrm{d}\omega - \int_{\Omega_c} 2| (x(\omega)-\overline{x})|^2 \, \mathrm{d}\omega \\
        &\overset{\hphantom{\eqref{eq:Young}}}{\geq} \tfrac{1}{2}|\overline{x}|^2 |\Omega_c| - 2 \| x-\overline{x}\|_{L^2(\Omega)}^2 \\
        &\overset{\eqref{eq:Young}}{\geq} \tfrac{1}{2}|\overline{x}|^2 |\Omega_c| - \tfrac{2}{C}  \langle A_Nx,x \rangle_{L^2(\Omega)}
    \end{align*}
    for all $x \in \dom{A_N}$. Hence, 
    \begin{align*}
         \langle (A_N+BB^*)x,x \rangle_{L^2(\Omega)} 
         &= \langle A_Nx,x \rangle_{L^2(\Omega)} + \| B^*x\|_{L^2(\Omega)}^2 \\
         &\geq \tfrac{|\Omega_c|}{4(1+2/C)} |\overline{x}|^2 + \tfrac{C}{2} \| x-\overline{x}\|_{L^2(\Omega)}^2 \\
         & \geq \operatorname{min}(\tfrac{|\Omega_c|}{4(1+2/C)} , \tfrac{C}{2}) (|\overline{x}|^2 + \| x-\overline{x}\|_{L^2(\Omega)}^2) \\
         & = \operatorname{min}(\tfrac{|\Omega_c|}{4(1+2/C)} , \tfrac{C}{2}) \| x\|_{L^2(\Omega)}^2 ,
    \end{align*}
    where the last equality follows from 
    \begin{align*}
         \| x\|_{L^2(\Omega)}^2
        &= \|\overline{x}+ (x-\overline{x})\|_{L^2(\Omega)}^2 \\
        &= \| \overline{x} \|_{L^2(\Omega)}^2+2 \langle \overline{x}, x-\overline{x} \rangle_{L^2(\Omega)} +\| x-\overline{x}\|_{L^2(\Omega)}^2 \\
        &= |\overline{x}|^2 +\| x-\overline{x}\|_{L^2(\Omega)}^2.
    \end{align*}
    The desired result follows with $\alpha \coloneqq\operatorname{min}(\tfrac{|\Omega_c|}{4(1+2/C)} , \tfrac{C}{2})  $, which is positive due to the positive Lebesgue measure of the control region $\Omega_c$. 
\end{proof}
The convergence of our suggested control scheme is visualized in the Figures \ref{fig:heat_state} for for the choice $T_{\min}=-5.0, T_{\max}=7.0$. We provide a snapshot of the saturated control with corresponding state at time $t=5.0$ in Figure \ref{fig:heat_snapshot}.
\begin{figure}[htb]
\centering
\begin{minipage}[t]{0.49\columnwidth}
  \centering
  \includegraphics[width=0.8\linewidth]{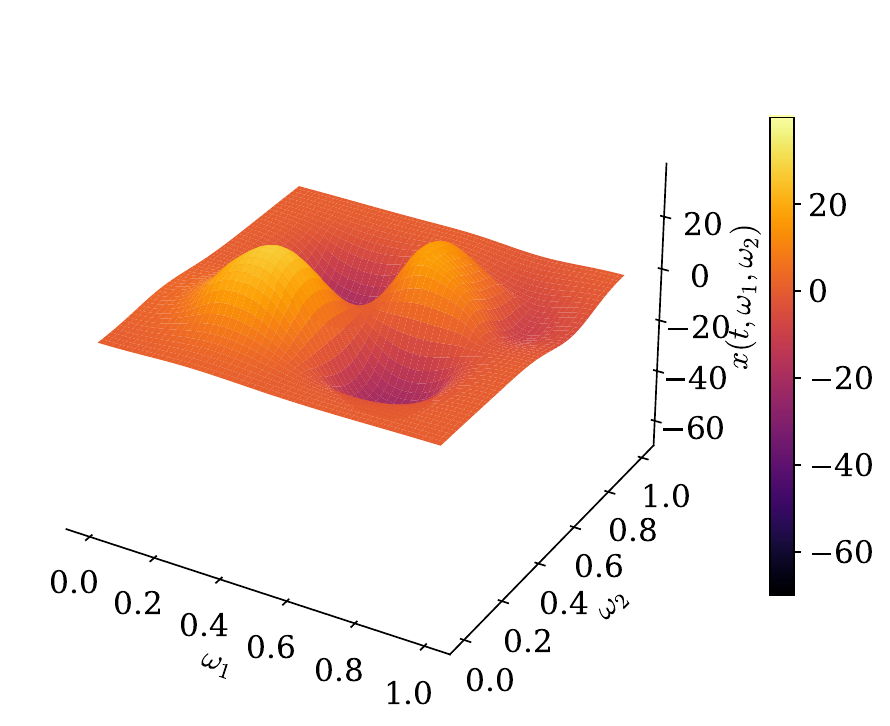}
  \par\small (a) $x(5, \cdot)$
\end{minipage}\hfill%
\begin{minipage}[t]{0.49\columnwidth}
  \centering
  \includegraphics[width=0.8\linewidth]{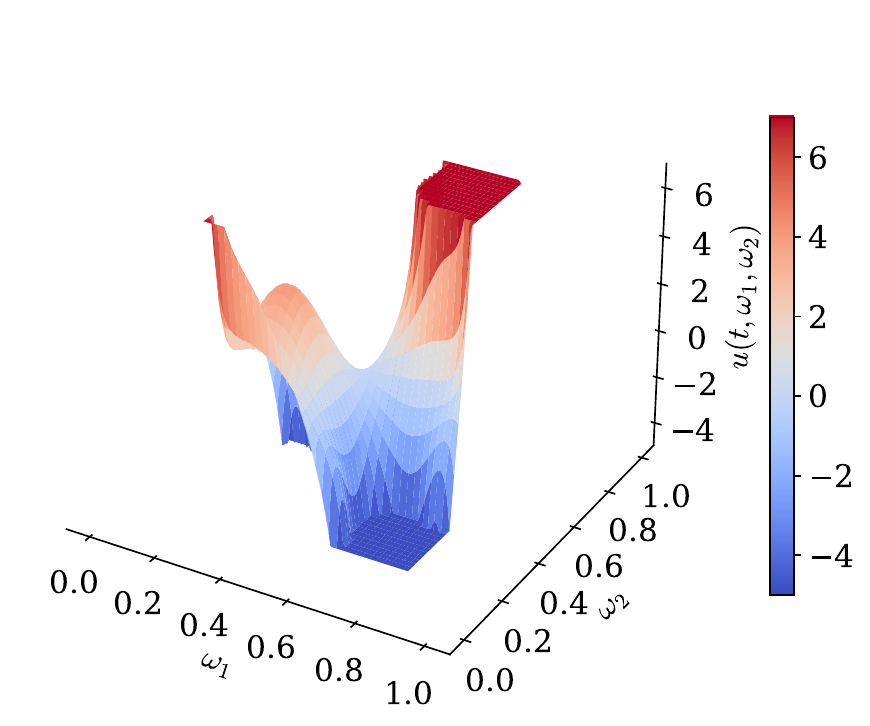}
  \par\small (b) $u(5, \cdot)$
\end{minipage}
\caption{The plot $(b)$ shows the controller \eqref{eq:heat_control} in action. The corresponding state at time $t=5.0$ is shown in $(a)$.}
\label{fig:heat_snapshot}
\end{figure}
\begin{figure}[htb]
  \centering
  \includegraphics[width=0.8\columnwidth]{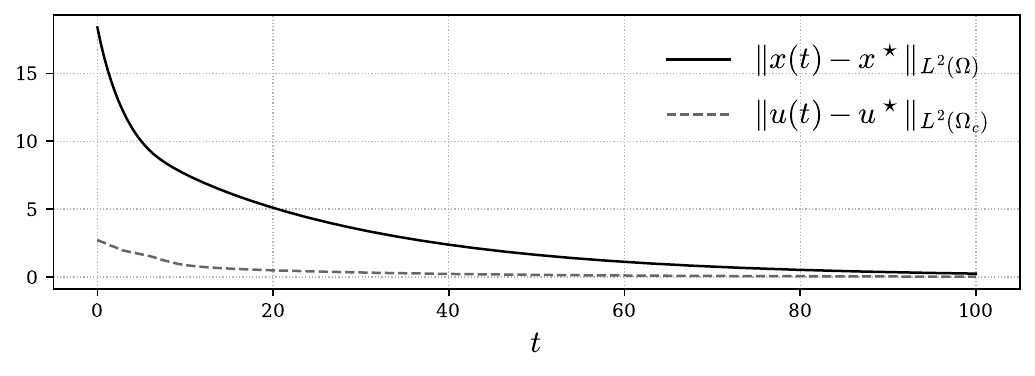}
  \includegraphics[width=0.8\columnwidth]{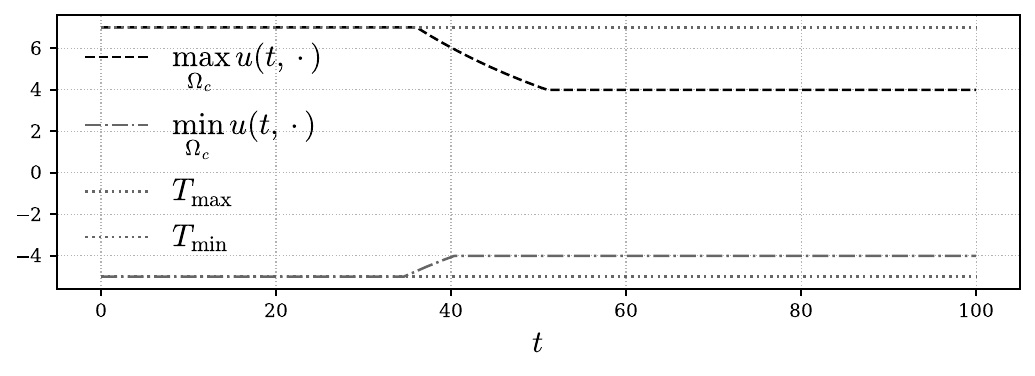}
  \caption{Top: Convergence of the closed-loop state and control. Bottom: Feasibility of the control.}
  \label{fig:heat_state}
\end{figure}

\subsection{Two-dimensional wave equation}\label{subsec:wave}
\noindent In this subsection, we apply our control scheme to the wave equation with homogeneous Dirichlet boundary conditions on a dogbone-shaped domain $\Omega \subset \R^2$ depicted in Figure~\ref{fig:dogbone}. 
\begin{figure}[htb]
  \centering
  \begin{tikzpicture}[scale=1.25, line join=round, line cap=round]
    \def\R{1.4}       
    \def\w{0.6}       
    \def\d{2.0}       
    \def\eps{0.4}    
    \def\gap{40}      
    \def\gapto{90}
    \pgfmathsetmacro{\theta}{asin(\w/\R)}           
    \pgfmathsetmacro{\h}{sqrt(\R*\R-\w*\w)}         
    \pgfmathsetmacro{\xL}{-\d+\h}                   
    \pgfmathsetmacro{\xR}{\d-\h}                    

    \def\DogbonePath{%
      (\xL,\w)
        -- (\xR,\w)
        arc[start angle=180-\theta, delta angle=-(360-2*\theta), radius=\R]
        -- (\xL,-\w)
        arc[start angle=360-\theta, delta angle=-(360-2*\theta), radius=\R]
        -- cycle
    }

    \path[fill=gray!5] \DogbonePath;

    \begin{scope}
      \clip \DogbonePath;

      \path[fill=red!25] (\xL,\w-\eps) rectangle (\xR,\w);
      \path[fill=red!25] (\xL,-\w)     rectangle (\xR,-\w+\eps);

\begin{scope}
  \clip (-\d,0)
    -- ++(\theta+\gap:\R+1)
    arc[start angle=\theta+\gap, end angle=360-\theta-\gap, radius=\R+1]
    -- cycle;
  \path[fill=red!25, even odd rule]
    (-\d,0) circle[radius=\R]
    (-\d,0) circle[radius=\R-\eps];
\end{scope}

\begin{scope}
  \clip (\d,0)
    -- ++(0:\R+2)
    arc[start angle=0, end angle=180-\theta-\gap, radius=\R+2]
    -- cycle;
  \path[fill=red!25, even odd rule]
    (\d,0) circle[radius=\R]
    (\d,0) circle[radius=\R-\eps];
\end{scope}

\begin{scope}
  \clip (\d,0)
    -- ++(180+\theta+\gap:\R+2)
    arc[start angle=180+\theta+\gap, end angle=360, radius=\R+2]
    -- cycle;
  \path[fill=red!25, even odd rule]
    (\d,0) circle[radius=\R]
    (\d,0) circle[radius=\R-\eps];
\end{scope}
    
      \draw[dashed] (\xL,\w-\eps) rectangle (\xR,\w);
      \draw[dashed] (\xL,-\w) rectangle (\xR,-\w+\eps);
    \end{scope}

    \draw[thick] \DogbonePath;

\draw[dashed]
  (\xL,\w-\eps) -- (\xR,\w-\eps)
  (\xL,-\w+\eps) -- (\xR,-\w+\eps)
  (\xL,\w) -- (\xR,\w)
  (\xL,-\w) -- (\xR,-\w);

\draw[dashed]
  (-\d,0) ++(\theta+\gap:\R)
  arc[start angle=\theta+\gap, end angle=360-\theta-\gap, radius=\R];

\draw[dashed]
  (-\d,0) ++(\theta+\gap:\R-\eps)
  arc[start angle=\theta+\gap, end angle=360-\theta-\gap, radius=\R-\eps];

\draw[dashed]
  (-\d,0) ++(\theta+\gap:\R) -- ++(\theta+\gap:-\eps);
\draw[dashed]
  (-\d,0) ++(360-\theta-\gap:\R) -- ++(360-\theta-\gap:-\eps);

\draw[dashed]
  (\d,0) ++(0:\R)
  arc[start angle=0, end angle=180-\theta-\gap, radius=\R];

\draw[dashed]
  (\d,0) ++(180+\theta+\gap:\R)
  arc[start angle=180+\theta+\gap, end angle=360, radius=\R];

\draw[dashed]
  (\d,0) ++(0:\R-\eps)
  arc[start angle=0, end angle=180-\theta-\gap, radius=\R-\eps];

\draw[dashed]
  (\d,0) ++(180+\theta+\gap:\R-\eps)
  arc[start angle=180+\theta+\gap, end angle=360, radius=\R-\eps];

\draw[dashed]
  (\d,0) ++(180-\theta-\gap:\R) -- ++(180-\theta-\gap:-\eps);
\draw[dashed]
  (\d,0) ++(180+\theta+\gap:\R) -- ++(180+\theta+\gap:-\eps);
    \node at (-3.16,0) {$\Omega_1$};
    \node at (3.21,0) {$\Omega_4$};
    \node at (0.0,0.4) {$\Omega_2$};
    \node at (0.0,-0.4) {$\Omega_3$};
    \node at (0.85*\d,0) {$\Omega\setminus\Omega_c$};

  \end{tikzpicture}
  \caption{Dogbone domain $\Omega$ with collars ($\Omega_1,\Omega_4$) in both lobes and two strips ($\Omega_2,\Omega_3$) along the neck edges. The control region is $\Omega_c=\cup_{i=1,\ldots,4} \Omega_i$.}
  \label{fig:dogbone}
\end{figure}
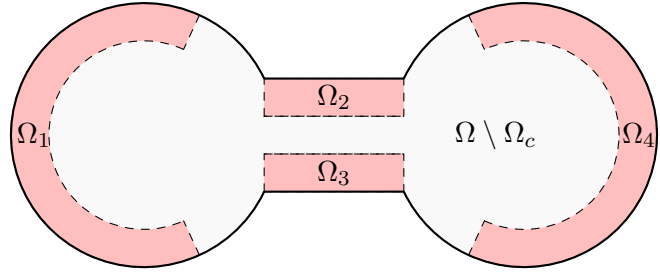 To this end, let $H_0^1(\Omega)$ denote the closure of $C_c^\infty(\Omega)$, the space of smooth compactly supported functions on $\Omega$, with respect to the norm in $H^1(\Omega)$. The state space is the Hilbert space
\begin{equation*}
    X \coloneqq H_0^1(\Omega) \times  L^2(\Omega)
\end{equation*}
endowed with the energy inner product
\begin{equation*}
    \langle (x_1,x_2) , (z_1,z_2) \rangle_{X} \coloneqq  \langle \nabla x_1, \nabla z_1 \rangle_{L^2(\Omega; \R^2)} +  \langle  x_2, z_2 \rangle_{L^2(\Omega)}.
\end{equation*}
We use the displacement-velocity formulation of the wave equation. Hence, define the operator $\mathfrak{A}: X \supset \dom{\mathfrak{A}} \to X$
\begin{equation*}
    \mathfrak{A} \begin{bmatrix}
        x_1 \\ x_2
    \end{bmatrix} = \begin{bmatrix}
        0 & 1 \\ \Delta  & 0
    \end{bmatrix} \begin{bmatrix}
        x_1 \\ x_2
    \end{bmatrix}
\end{equation*}
with
\begin{equation*}
   \dom{\mathfrak{A}} \coloneqq \left\{\begin{bmatrix}
          x_1 \\ x_2
    \end{bmatrix} \in H_0^1(\Omega) \times H_0^1(\Omega)   : \Delta u \in L^2(\Omega) \right\},
\end{equation*}
where $\Delta: H_0^1(\Omega) \to H^{-1}(\Omega)$ denotes the unique bounded extension of the Dirichlet-Laplacian in $L^2(\Omega)$ with domain $H^2(\Omega) \cap H_0^1(\Omega)$. It is easy to check that $\mathfrak{A}$ is skew-adjoint, i.e., $\mathfrak{A}^* = -\mathfrak{A}$ which in particular implies that $-\mathfrak{A}$ is maximally monotone. Consequently, due to \textit{Stone's theorem} \cite[Thm. 3.8.6]{tucsnak2009observation}, $\mathfrak{A}$ generates a unitary group on $X$. 

The control is restricted to the subdomain $\Omega_c \subset \Omega$ illustrated in Figure \ref{fig:dogbone} and acts as a force in the control region. More precisely, we choose the input operator $B: L^2(\Omega_c) \to X$ with
\begin{equation*}
    Bu = \begin{bmatrix}
        0 \\ \mathds{1}_{\Omega_c}
    \end{bmatrix} u,
\end{equation*}
where $ \mathds{1}_{\Omega_c}$ is defined as in Subsection \ref{subsec:heat}. Thus, the dynamics is given by the maximally monotone control system
\begin{subequations}\label{eq:wave_sys}
    \begin{align}
    \tfrac{\mathrm{d}}{\mathrm{d}t} \begin{bmatrix}
         x_1(t) \\ x_2(t)
    \end{bmatrix} &= \begin{bmatrix}
        0 & 1 \\ \Delta  & 0
    \end{bmatrix}  \begin{bmatrix}
         x_1(t) \\ x_2(t)
    \end{bmatrix} + \begin{bmatrix}
        0 \\ \mathds{1}_{\Omega_c}
    \end{bmatrix} u(t), \label{eq:wave_sys:state}\\
    y(t) &= \left. x_2(t) \right|_{\Omega_c} \label{eq:wave_sys:output}
\end{align}
\end{subequations}
with initial value $(x_1(0),x_2(0))=x_0$. The set of admissible controls is 
\begin{equation*}
    F= \left\{ u \in L^2(\Omega_c) : -1 \leq u(\omega) \leq 1 \ \text{a.e. in} \ \Omega_c \right\}.
\end{equation*}
Following \cite{bardos1992sharp}, waves propagate approximately along rays of geometric optics which are straight lines reflecting on the boundary of the spatial domain. The \textit{geometric control condition (GCC)} requires that
every such ray must intersect the control region within a finite time. The control region $\Omega_c$ illustrated in Figure \ref{fig:dogbone} fulfills the GCC such that $(\mathfrak{A},B)$ is exactly controllable in finite time, see \cite{rauch1974exponential, zuazua2024exact}. Consequently, $(\mathfrak{A}^*, B^*)$ 
is exactly observable and since $\mathfrak{A}$ is skew-adjoint, $(\mathfrak{A}, B^*)$ is exponentially detectable. By Proposition \ref{prop:exp_det}, the closed-loop system 
\begin{equation}\label{eq:closed_loop_wave}
    \tfrac{\mathrm{d}}{\mathrm{d}t} \begin{bmatrix}
         x_1(t) \\ x_2(t)
    \end{bmatrix} = \begin{bmatrix}
        0 & 1 \\ \Delta  & 0
    \end{bmatrix}  \begin{bmatrix}
         x_1(t) \\ x_2(t)
    \end{bmatrix} + \begin{bmatrix}
        0 \\ \mathds{1}_{\Omega_c}P_F(- \left. x_2(t) \right|_{\Omega_c})
    \end{bmatrix}
\end{equation}
fulfills the VOVS property at the origin, and, consequently, by Theorem \ref{thm:main}, the system \eqref{eq:closed_loop_wave} is asymptotically stable. The convergence of the scheme is illustrated in the upper plot of Figure \ref{fig:wave_saturated}. The controller $u$ is admissible, which is visualized at the lower plot of Figure \ref{fig:wave_saturated}.
\begin{figure}[htb]
  \centering
  \includegraphics[width=0.8\columnwidth]{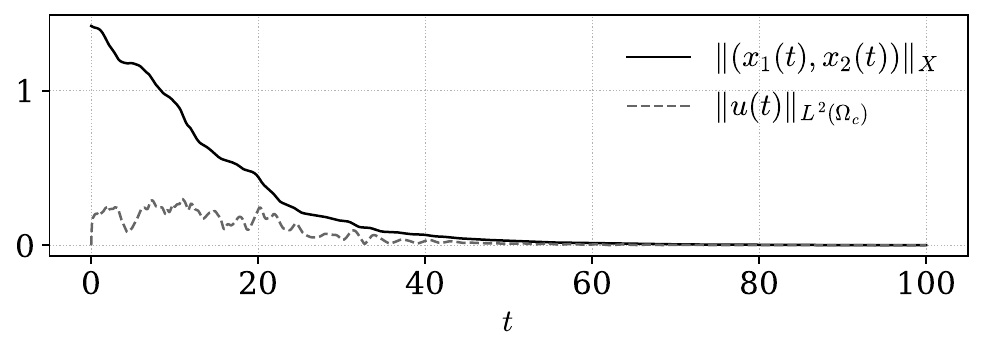}
  \includegraphics[width=0.8\columnwidth]{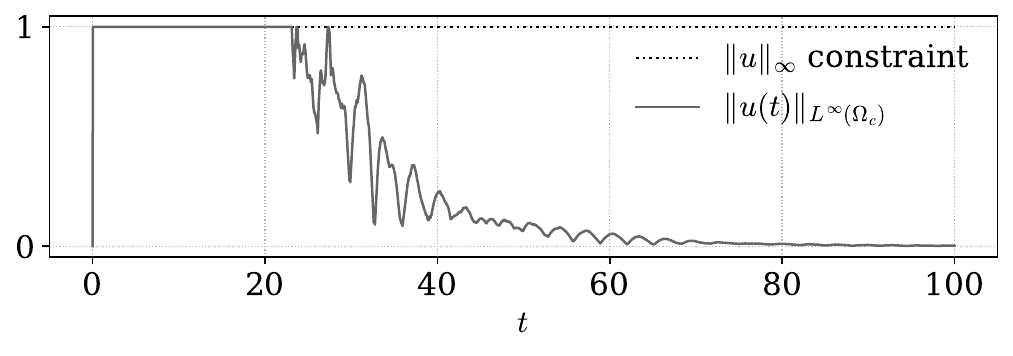}
  \caption{Top: Energy-norm of the closed-loop state trajectory $( x_1(t),x_2(t))$ of \eqref{eq:closed_loop_wave} and the $L^2(\Omega_c)-$norm of $u(t)=P_F(- \left. x_2(t) \right|_{\Omega_c})$ over time. Bottom: $L^\infty(\Omega_c)-$norm of $u(t)$ over time}
  \label{fig:wave_saturated}
\end{figure}

\begin{figure}[htb]
\centering
\begin{minipage}[t]{0.49\columnwidth}
  \centering
  \includegraphics[width=0.8\linewidth]{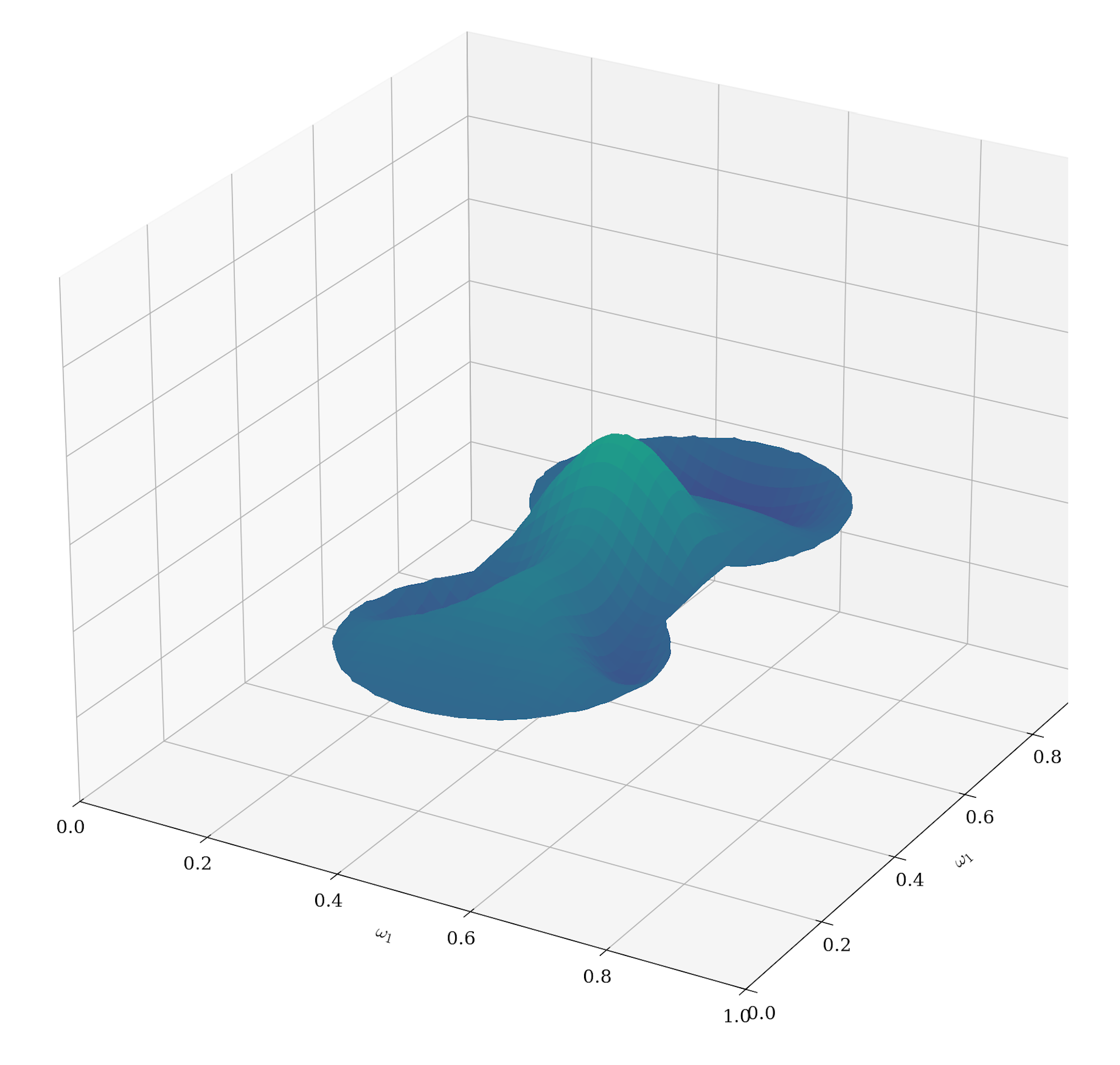}
  \par\small (a) $x_1(5, \cdot)$
\end{minipage}\hfill%
\begin{minipage}[t]{0.49\columnwidth}
  \centering
  \includegraphics[width=0.8\linewidth]{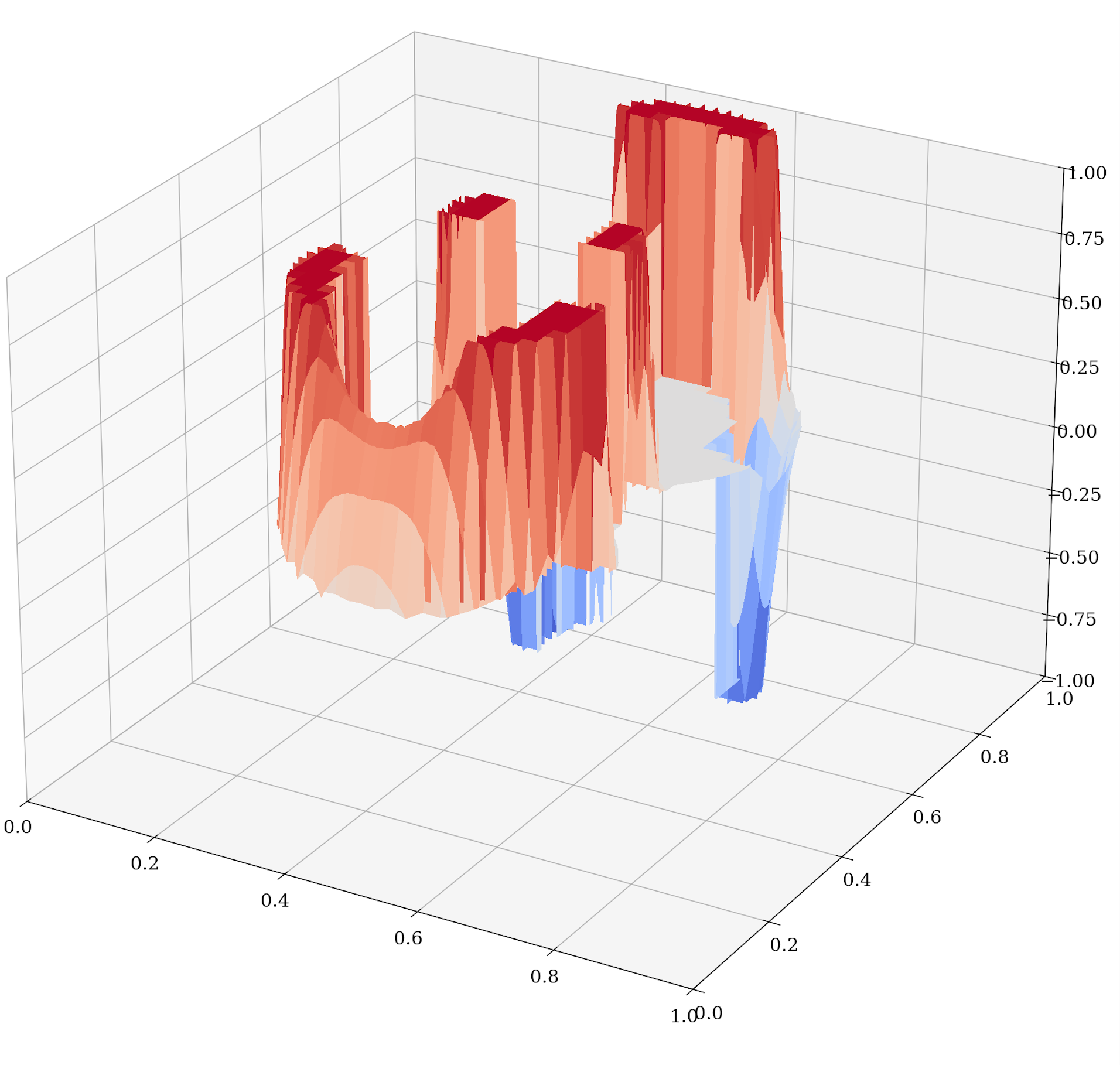}
  \par\small (b) $u(5, \cdot)$
\end{minipage}
\caption{The plot $(b)$ shows the control scheme in action. The corresponding displacement is shown in $(a)$.}
\label{fig:wave_snapshot}
\end{figure}
\newpage

\bibliographystyle{abbrv}

\bibliography{cas-refs}

\newpage

\end{document}